\newcommand\brefGM[1]{\ref{GM#1}}
\newcommand\brefRK[1]{\ref{RK#1}}
\newcommand\brefWA[1]{\ref{WA#1}}
\newcommand\refGM[1]{I.\brefGM{#1}}
\newcommand\refRK[1]{II.\brefRK{#1}}
\newcommand\refWA[1]{III.\brefWA{#1}}
\newcommand\prefGM[1]{(\brefGM{#1})}
\newcommand\prefWA[1]{(\brefWA{#1})}
\newcommand\trefGM[2]{\refGM{#1}~\prefGM{#2}}
\newcommand\trefWA[2]{\refWA{#1}~\prefWA{#2}}
\newcounter{enumitemp}
\newenvironment{enumeratecontinue}{
 \setcounter{enumitemp}{\value{enumi}}
 \begin{enumerate}
 \setcounter{enumi}{\value{enumitemp}}
}
{
 \end{enumerate}
}
\newcommand\pref[1]{(\ref{#1})}
\newtheorem{thm}{Theorem}[section]
\newtheorem*{theorem*}{Theorem}
\newtheorem*{TheoremJ}{Theorem J}
\newtheorem*{propositionUnivAttrInd}{Inductive Step of Proposition \ref{PropUniversallyAttracting}}
\newtheorem*{propositionDrivingUpInd}{Inductive Step of Proposition \ref{PropDrivingUp}}
\newtheorem{lemma}[thm]{Lemma}
\newtheorem{proposition}[thm]{Proposition}
\newtheorem*{proposition*}{Proposition}
\newtheorem{prop}[thm]{Proposition}
\theoremstyle{definition}
\newtheorem{definition}[thm]{Definition} 
\newtheorem*{defn*}{Definition}
\newtheorem{remark}[thm]{Remark}
\theoremstyle{remark}
\newcounter{remarks}
{\paragraph*{Remarks}\smallskip
 \begin{list}{\arabic{remarks}. }{\usecounter{remarks}%
 \setlength{\leftmargin}{0in}%
 \setlength{\rightmargin}{0in}%
 \setlength{\labelsep}{0pt}%
 \setlength{\labelwidth}{0pt}%
 \setlength{\listparindent}{0pt}%
 }
}
{
\end{list}
}
\newcommand\from\colon
\newcommand\inv{{-1}}
\newcommand\subgroup{<}
\newcommand\infinity\infty
\newcommand\na{\text{na}}
\newcommand\supp{\text{supp}}
\newcommand\disjunion\coprod
\DeclareMathOperator{\cl}{cl}
\DeclareMathOperator\IA{IA}
\DeclareMathOperator\Acc{Acc}
\newcommand\IAThree{\IA_n(\Z/3)}
\newcommand{\A}{\mathcal A}
\newcommand\B{\mathcal B}
\newcommand{\N}{{\mathbb N}}
\newcommand{\Z}{{\mathbb Z}}
\renewcommand\H{{\mathcal H}}
\newcommand{\h}{\H}
\newcommand\cH\h
\newcommand{\V}{\mathcal V}
\newcommand{\Out}{\mathsf{Out}}
\newcommand{\Stab}{\mathsf{Stab}}
\newcommand{\F}{\mathcal F}
\renewcommand\L{\mathcal L}
\newcommand{\fG} {f : G \to G}
\newcommand{\ti} {\tilde}
\newcommand{\eg}{EG}
\newcommand{\noneg}{NEG}
\renewcommand\neg\noneg
\newcommand{\wt}{\widetilde}
\newcommand{\ct}{CT}
\newcommand{\cts}{CTs}
\newcommand{\comment}[1]{}
\newcommand\BookOne{\cite{BFH:TitsOne}}
\newcommand\Intro{\cite{HandelMosher:SubgroupsIntro}}
\newcommand\PartOne{Part I \cite{HandelMosher:SubgroupsI}}
\newcommand\PartTwo{Part II \cite{HandelMosher:SubgroupsII}}
\newcommand\PartThree{Part III \cite{HandelMosher:SubgroupsIII}}
\DeclareMathOperator\interior{int}
\newcommand\bdy\partial
\newcommand\intersect\cap
\newcommand\union\cup
\newcommand\<\langle
\renewcommand\>\rangle
\newcommand\meet\wedge
\newcommand\composed{\circ}
\newcommand\cross\times
\newcommand\restrict{\bigm |}
\newcommand\wh{\widehat}
\newcommand\inject\hookrightarrow
\newcommand\Id{\text{Id}}
 \newcommand\surjection\twoheadrightarrow
\newcommand\suchthat{\bigm|}
\newcommand\hyp{\mathbf{H}}
\DeclareMathOperator\MCG{\mathcal{MCG}}
\newcommand{\Lambdapmp}{\Lambda^{\pm}_\phi}
\newcommand\absolute{\text{\tiny{\,abs}}}
\newcommand\relative{\text{\tiny{\,rel}}}
\title{Subgroup decomposition in $\Out(F_n)$\\ Part IV: Relatively irreducible subgroups}
\author{Michael Handel and Lee Mosher}
\begin{document}

\maketitle

\begin{abstract}
This is the fourth and last in a series of four papers, announced in \cite{HandelMosher:SubgroupsIntro}, that develop a decomposition theory for subgroups of $\Out(F_n)$.

In this paper we develop general ping-pong techniques for the action of $\Out(F_n)$ on the space of lines of $F_n$. Using these techniques we prove the main results stated in \cite{HandelMosher:SubgroupsIntro}, Theorem~C and its special case Theorem~I, the latter of which says that for any finitely generated subgroup $\h \subgroup \Out(F_n)$ that acts trivially on homology with $\Z/3$ coefficients, and for any free factor system $\F$ that does not come from a complementary pair of free factors of $F_n$ nor from a rank~$n-1$ free factor, if $\h$ is fully irreducible relative to $\F$ then $\h$ has an element that is fully irreducible relative to~$\F$. We also prove Theorem~J which, under the additional hypothesis that $\h$ is geometric relative to~$\F$, describes a strong relationship between $\h$ and a mapping class group of a surface. 
\end{abstract}

\subsection*{Statements of results}

Recall from the introduction \Intro\ the main theorem of this series, which we have restated with a broader hypothesis:
\begin{description}
\item[Theorem C.] 
Let $\H \subgroup \IA_n(\Z/3)$ be a subgroup and $\F \sqsubset \F'$ an $\h$-invariant multi-edge extension of free factor systems such that $\H$ is irreducible relative to $\F \sqsubset \F'$. If $\H$ is finitely generated, or if there exists $\theta \in \H$ and a lamination pair $\Lambda^\pm_\theta \in \L^\pm(\theta)$ that is carried by $\F'$ but not by $\F$, then there exists $\phi \in \h$ which is fully irreducible relative to $\F \sqsubset \F'$. 
\end{description}
We recall also from \Intro\ a separately stated special case, which we here strengthen by broadening the hypothesis, and by adding a moreover statement for further applications. If $\F$ is an $\H$-invariant free factor system then $\H$ is \emph{geometric above} $\F$ (Definition~\ref{DefGeometricAboveF}) if for each $\phi \in \h$, each nongeometric lamination pair in $\L^\pm(\phi)$ is supported by~$\F$ (see \PartOne\ for material on geometric lamination pairs).
\begin{description}
\item[Theorem I.]
Let $\H \subgroup \IA_n(\Z/3)$ be a subgroup and  $\F \sqsubset \{[F_n]\}$   an $\h$-invariant multi-edge extension of free factor systems such that $\H$ is irreducible relative to $\F$. If $\H$ is finitely generated, or if there exists $\theta \in \H$ and a lamination pair $\Lambda^\pm_\theta \in \L^\pm(\theta)$ that is not carried by $\F$,  then there exists $\phi \in \H$ which is fully irreducible relative to $\F$. Moreover, if     either $\Lambda^-_\theta$ is non-geometric or $\H$ is geometric above $\F$ then for any weak neighborhood $U \subset \B$ of a generic leaf of $\Lambda^-_\theta$ we may choose $\phi$ so that generic leaves of $\Lambda^-_\phi$, the unique element of $\L(\phi^{-1})$ that is not carried by $\F$, are contained in $U$.
\end{description}

In this paper we prove these theorems and the closely related Theorem~J, a version of Theorem~I that applies under the additional hypothesis that $\h$ is geometric  above~$\F$. 

Recall that for $\h$ or $\phi$ to be irreducible relative to $\F \sqsubset \F'$ (when $\F'=\{[F_n]\}$ it is dropped from the notation) means that there is no $\h$-invariant free factor system strictly between $\F$ and $\F'$; and relatively fully irreducible means relatively irreducible for all finite index subgroups or finite powers, respectively, of $\h$ or $\phi$. Throughout this paper, in the context of $\IA_n(\Z/3)$ we often drop the adjective ``fully'' and simply write ``irreducible'' --- this is justified by applying Theorem~B aka Theorem~\refRK{ThmPeriodicFreeFactor},\footnote{Cross references such as ``Theorem II.X.Y'' refer to Theorem X.Y of \PartTwo. Cross references to the Introduction \Intro, to \PartOne, and to \PartTwo\ are to the June 2013 versions.} 
which says that for each $\phi \in \IA_n(\Z/3)$, a free factor system $\F$ is $\phi$-periodic if and only if it is~$\phi$-invariant, and so a subgroup or element of $\IA_n(\Z/3)$ is fully irreducible relative to an invariant extension of free factor systems $\F \sqsubset \F'$ if and only if it irreducible relative to that extension.

\subsection*{Outline and contents.} Here is an overview of Part~IV, consisting mostly of a somewhat detailed outline of the proof of Theorem~I. The reader may prefer to review the briefer introduction to Part~IV found in \Intro, and then go right to main body of the paper beginning in Section~\ref{SectionFindingAttrLams}. 

\smallskip\textbf{Section \ref{SectionPingPongArgument}. The ping-pong argument.} Consider $\h$ and $\F$ as in Theorem~I. The proof of that theorem depends on a ping-pong game described in Proposition~\ref{PropSmallerComplexity}, which is based in turn on the weak attraction theory developed in \PartThree. The ping-pong game has two players, two elements $\phi,\psi \in \h$ equipped with laminations pairs $\Lambda^\pm_\phi \in \L^\pm(\phi)$, $\Lambda^\pm_\psi \in \L^\pm(\psi)$ neither of which are carried by~$\F$. Since $\F$ is invariant under both $\phi$ and $\psi$, both nonattracting subgroup systems $\A_\na\Lambda^\pm_\phi$, $\A_\na\Lambda^\pm_\psi$ carry~$\F$. The conclusion of Proposition~\ref{PropSmallerComplexity} is the existence of large exponents $l,m > 0$ so that $\xi = \psi^l \phi^m$ has a lamination pair $\Lambda^\pm_\xi$ whose nonattracting subgroup system $\A_\na \Lambda^\pm_\xi$ also carries $\F$, and such that the following hold: $\A_\na\Lambda^\pm_\xi$ is carried by each of $\A_\na \Lambda^\pm_\phi$ and $\A_\na \Lambda^\pm_\psi$; the laminations $\Lambda^+_\xi, \Lambda^+_\psi$ are close in the weak topology; and the laminations $\Lambda^-_\xi,\Lambda^-_\phi$ are also close.

Proposition~\ref{PropSmallerComplexity} achieves these conclusions under the hypotheses that the positive laminations are weakly attracted to each other under forward iteration and the negative laminations are weakly attracted to each other under negative interation. Proposition~\ref{PropSmallerComplexity} has further hypotheses and conclusions designed to control geometric behavior, and to control the ``closeness'' of $\Lambda^+_\xi$ to $\Lambda^+_\psi$ and of $\Lambda^-_\xi$ to $\Lambda^-_\phi$. 

\smallskip\textbf{Section~\ref{SectionConstructingConjugator}. Constructing a conjugator.} A single ping-pong player needs another player in order to have a game. Starting with only a single player, an element $\phi \in \h$ with lamination pair $\Lambda^\pm_\phi \in \L^\pm(\phi)$ not carried by~$\F$, for a second player one uses a conjugate $\psi = \zeta \phi \zeta^\inv$ with lamination pair $\Lambda^\pm_\psi = \zeta(\Lambda^\pm_\phi)$. In order for the hypotheses of Proposition~\ref{PropSmallerComplexity} to be satisfied, the conjugator $\zeta \in \h$ must scramble up the given data regarding~$\phi$: $\zeta$ must not preserve $\A_\na\Lambda^\pm_\phi$, it must not map generic leaves of $\Lambda^\pm_\phi$ into the nonattracting subgroup system $\A_\na\Lambda^\pm_\phi$, and a few other useful properties of $\zeta$ must hold. Such a $\zeta$ need not exist in general, certainly not if $\h$ stabilizes $\A_\na\Lambda^\pm_\phi$, so Lemma~\ref{LemmaConjugatorConstructor} has a strong hypothesis requiring that the subgroup of $\h$ that stabilizes $\A_\na(\Lambda^\pm_\phi)$ has infinite index in~$\h$.

\smallskip\textbf{Sections~\ref{SectionProofUnivAttr} and~\ref{SectionLooking}: Driving down $\A_\na \Lambda^\pm_\phi$ and driving up $\F_\supp(\Lambda^\pm_\phi)$.} The proof of Theorem~I follows different courses depending on whether $\h$~is geometric above~$\F$. 

If $\h$ is not geometric above $\F$, the proof of Theorem~I requires two ping-pong tournaments. Each round of each tournament starts with one player $\phi \in \h$ and nongeometric lamination pair $\Lambda^\pm_\phi$ not carried by $\F$. A second player $\psi = \zeta \phi \zeta^\inv$ with pair $\Lambda^\pm_\psi = \zeta(\Lambda^\pm_\psi)$ is found using a conjugator $\zeta$ chosen according to Lemma~\ref{LemmaConjugatorConstructor}. Under these conditions, the free factor systems $\A_\na(\Lambda^\pm_\phi)$ and $\A_\na(\Lambda^\pm_\psi)$ each contain~$\F$. The first tournament, described in Proposition~\ref{PropUniversallyAttracting}, is an inductive procedure for driving down the free factor system $\A_\na(\Lambda^\pm_\phi)$: one iteratively applies the ping-pong result Proposition~\ref{PropSmallerComplexity}, producing $\xi = \psi^l \phi^m$ and a nongeometric $\Lambda^\pm_\xi$ so that $\A_\na(\Lambda^\pm_\xi)$ still contains $\F$ but is strictly contained in $\A_\na(\Lambda^\pm_\phi)$ in the sense of the partial ordering $\sqsubset$. Strictly descending chains of free factor systems under $\sqsubset$ must terminate, and by induction one obtains the conclusion of Proposition~\ref{PropUniversallyAttracting}: there exists a nongeometric $\phi \in \h$ and $\Lambda^\pm_\phi$ such that the free factor system $\A_\na\Lambda^\pm_\phi$ is minimal subject to the requirement of carrying $\F$, meaning simply that $\A_\na\Lambda^\pm_\phi = \F$. The second tournament, described in Proposition~\ref{SectionLooking}, is a similar inductive procedure to drive up the absolute free factor support $\F_\supp(\Lambda^\pm_\phi)$, subject to the requirement that $\A_\na(\Lambda^\pm_\phi)$ stays fixed at~$\F$. Once $\F_\supp(\Lambda^\pm_\phi)$ reaches its maximum, the relative free factor support $\F_\supp(\Lambda^\pm_\phi,\F)$ also reaches its maximum which must be $\{[F_n]\}$. It follows that $\phi$ is fully irreducible relative to~$\F$, completing Theorem~I if $\h$ is not geometric above~$\F$.

In the case that $\h$ is geometric above~$\F$, only the first ping-pong tournament is needed. When a lamination pair $\Lambda^\pm_\phi$ is not geometric then the nonattracting subgroup system $\A_\na(\Lambda^\pm_\phi)$ is not a free factor system but it is a vertex group system (see Section~\refGM{SectionVertexGroupSystems}). The descending chain condition (see Proposition~\refGM{PropVDCC}) shows that general vertex group systems are subject to induction: every strictly descending chain of vertex group systems must terminate. At the conclusion of Proposition~\ref{PropUniversallyAttracting} one obtains $\phi \in \h$ and $\Lambda^\pm_\phi$ such that $\A_\na\Lambda^\pm_\phi$ attains its minimal value. Using that $\h$ is geometric above~$\F$ one obtains automatically that $\F_\supp(\Lambda^\pm_\phi,\F)$ automatically attains its maximal value, obviating the need for a second ping-pong tournament. Again one concludes that $\phi$ is fully irreducible rel~$\F$. This ``automatic maximality'' phenomenon generalizes a familiar feature of subgroup classification theory for the mapping class group $\MCG(S)$ of a finite type surface $S$ \cite{Ivanov:subgroups}. Given a subsurface $A \subset S$ with connected complement $B=S-A$, and given $\psi \in \MCG(S)$ leaving both $A$ and $B$ invariant up to isotopy, if $\psi$ is pseudo-Anosov on some subsurface of $B$ with a certain stable-unstable lamination pair $\Lambda^s,\Lambda^u$, and if $A$ is the ``nonattracting subsurface'' for $\Lambda^s,\Lambda^u$ (i.e.\ a simple closed curve is not attracted to $\Lambda^u$ under positive iterates of $\psi$ if and only if that curve is isotopic into $A$) then $\psi$ is automatically pseudo-Anosov on the entirety of~$B$. As a special case, if $\bdy S \ne \emptyset$ and $A$ is a regular neighborhood of $\bdy S$, and so $B$ is isotopic to $S$ itself, then $\psi$ is pseudo-Anosov on the entirety of $S$ if and only if the only curves not attracted to $\Lambda^s,\Lambda^u$ are the components of~$\bdy S$.

In both the geometric and nongeometric cases, the induction processes of the ping-pong tournaments are controlled by stabilizer groups. When driving down the nonattracting subgroup system $\A_\na(\Lambda^\pm_\phi)$ to its minimal value in the first tournament, the induction continues if and only if the subgroup of $\h$ that stabilizes $\A_\na(\Lambda^\pm_\phi)$ has infinite index in~$\h$; when the index is finite and the induction is complete, the minimal value of $\A_\na(\Lambda^\pm_\phi)$  is proved to have the desired form (see Case~1 of the proof of Proposition~\ref{PropUniversallyAttracting}). When driving up the free factor system $\F_\supp(\Lambda^\pm_\phi)$ in the second tournament, the induction continues if and only if the subgroup that stabilizes $\F_\supp(\Lambda^\pm_\phi)$ is proper in~$\h$; when the stabilizer subgroup equals~$\h$ and the induction is complete, the maximal value of the relative free factor support $\F_\supp(\Lambda^\pm_\phi,\F) = \F$ is deduced (see Case~1 of the proof of Proposition~\ref{PropDrivingUp}).

\smallskip\textbf{The role of finite generation: How to find the first player.} It is a poor ping-pong tournament when no-one show up to play. In order to apply the methods outlined above to prove Theorem~I, it is necessary at the start to supply some outer automorphism $\phi \in \h$ and a lamination pair $\Lambda^\pm_\phi \in \L^\pm(\phi)$ that is not carried by~$\F$. For this purpose, Theorems~C and~I contain a hypothesis saying that the subgroup $\h$ be finitely generated. The role of this hypothesis is to enable application of the Relative Kolchin Theorem of \PartThree, the conclusion of which is exactly the existence of $\phi \in \h$ and  $\Lambda^\pm_\phi \in \L^\pm(\phi)$ as needed. This is the \emph{only} place in the proofs of Theorem~C and~I where finite generation is used. Theorems~C and~I each have an alternate hypothesis which may be applied instead of finite generation. In the case of Theorem I, in any situation where one already has in hand $\phi \in \h$ and $\Lambda^\pm_\phi \in \L^\pm(\phi)$ such that $\Lambda^\pm_\phi$ is not supported by~$\F$, no finite generation hypothesis is needed and one may start the ping-pong tournament.

\smallskip\textbf{Section \ref{SectionRelGeomIrr}. Theorem J: Relatively geometric irreducible subgroups.} We prove the general, relative version of Theorem J, the absolute version of which is stated in \Intro. To do this, we go one step further in the analysis of the case where $\h$ is geometric above~$\F$, and in which we produced $\phi \in \h$ and a geometric lamination pair $\Lambda^\pm_\phi \in \L^\pm(\phi)$ for which $\A_\na\Lambda^\pm_\phi$ takes on its minimal value, of the form $\F \union \{[C]\}$. In this case, $[C]$ is represented by the top boundary curve $\bdy_0 S$ of the surface $S$ associated to a geometric model for $\phi$ and $\Lambda^\pm_\phi$. We use the logic of the proof of Theorem I to conclude that the stabilizer in $\h$ of $\A_\na\Lambda^\pm_\phi = \F \union \{[C]\}$ has finite index in $\h$, and therefore must equal~$\h$. We then apply Proposition~\refGM{PropVertToFree} to conclude that the entire subgroup $\h$ preserves the surface $S$ and its boundary components, inducing a homomorphism $\h \to \MCG(S)$ under which the image of $\phi$ is pseudo-Anosov.

\setcounter{tocdepth}{2}
\tableofcontents

\section{Ping-pong on geodesic lines}
\label{SectionPingPong}

\subsection{Finding attracting laminations}
\label{SectionFindingAttrLams}

Given $\phi \in \Out(F_n)$ there are several methods for finding an attracting lamination of $\phi$. One method is to take a relative train track representative $f \from G \to G$ and check the existence of an \eg\ stratum $H_r \subset G$ (Fact~\refGM{FactLamsAndStrata}). A second method, less concrete, is to check existence of a nontrivial conjugacy class $c$ such that for some (any) marked graph $G$, the length in $G$ of the conjugacy class represented by $\phi^i(c)$ is bounded below by an exponentially growing function of the exponent~$i$; the proof indirectly depends on relative train track theory, by noting that if a relative train track representative $f \from G \to G$ of $\phi$ has no \eg\ strata then for any circuit $\gamma$ in $G$ the number of edges in $f^k_\#(\gamma)$ has a polynomial upper bound in~$k$. 

In Lemma~\ref{FindingEG}, using a topological representative $f \from G \to G$ which need not be a relative train track map, we give a third method: from any path which maps over itself three times (in the sense of the $\#\#$~Lemma~\refGM{LemmaDoubleSharpFacts}) one obtains an attracting lamination. The proof uses the definition of attracting laminations directly. 

\smallskip
\textbf{Remarks.} The three methods just described give different amounts of information on the side regarding the attracting lamination that is produced. Relative train track maps give the most information: using filtration elements one obtains certain free factor systems which do and do not support the lamination; using edges of the \eg\ stratum $H_r$ one produces attracting neighborhoods of the lamination; and one can construct the nonattracting subgroup system of the lamination (Definitions~\refWA{defn:Z} and Corollary~\trefWA{CorPMna}{ItemAnaDependence}). The other two methods, including Lemma~\ref{FindingEG}, are useful when no relative train track map is available and when less extra information on the side is needed, although Lemma~\ref{FindingEG} will produce a useful attracting neighborhood.

\medskip

Recall from Section~\refGM{def:DoubleSharp} that any $\pi_1$-injective map $f \from K \to G$ of marked graphs naturally induces two path maps $f_\#, f_{\#\#} \from \wh\B(K) \to \wh\B(G)$ as follows: the path $f_\#(\gamma)$ is obtained by straightening the $f$ image of $\gamma$; and, roughly speaking, $f_{\#\#}(\gamma)$ is the largest common subpath of all $f_\#$-images of paths containing~$\gamma$. Recall also from Section~\refGM{SectionLineDefs} the notation $V(G,\gamma)$ for the basis element of the weak topology on $\B(G)$ associated to a finite path $\gamma$ in a finite graph $G$.

\begin{lemma} \label{FindingEG} 
Given $\phi \in \Out(F_n)$, a marked graph $G$, a topological representative $f \from G \to G$ of $\phi \in \Out(F_n)$, and a finite path $\beta \subset G$, if the path $f_{\#\#}(\beta)$ contains three disjoint copies of $\beta$ then there exists $\Lambda \in \L(\phi)$ and a generic leaf $\lambda$ of $\Lambda$ such that $\phi$ fixes $\Lambda$, $\phi$ fixes $\lambda$ preserving orientation, and $V(G,\beta)$ is an attracting neighborhood for $\Lambda$. Furthermore for any $i \ge 0$ each generic leaf of $\Lambda$ contains $f^i_{\#\#}(\beta)$ as a subpath. 
\end{lemma}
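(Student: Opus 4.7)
The strategy is to construct a bi-infinite line $\lambda$ in $G$ by iterating the ``three copies'' hypothesis inside the universal cover $\tilde G$, and then to let $\Lambda$ be the weak closure of the $\phi$-orbit of $\lambda$ in $\B(G)$. The role of \emph{three} (rather than just two) disjoint copies is exactly to give the room needed for the iterated path to grow on both sides of a distinguished middle copy of $\beta$, so that the limiting object is bi-infinite.

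First, unpack the hypothesis: write $f_{\#\#}(\beta) = u \cdot \beta^\star \cdot v$, where $\beta^\star$ is the middle of the three disjoint copies and where each arm $u$ and $v$ contains a full further copy of $\beta$ as a subpath. Thus $\beta^\star$ sits in $f_{\#\#}(\beta)$ with both a left arm and a right arm each long enough to again contain $\beta$, so the same procedure can be repeated at each of those secondary copies.

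Next, build the leaf. Lift $\beta^\star$ to a path $\tilde\tau_0$ in $\tilde G$ and choose a lift $\tilde f$ of $f$ representing an automorphism of $F_n$. Using Lemma~\refGM{LemmaDoubleSharpFacts} --- specifically the property that $f_\#(\sigma)$ contains $f_{\#\#}(\gamma)$ at a canonical position relative to each occurrence of $\gamma$ in $\sigma$ --- construct inductively nested lifts $\tilde\tau_0 \subset \tilde\tau_1 \subset \tilde\tau_2 \subset \cdots$, each containing $\tilde\tau_0$ at its middle and each strictly extending the previous on both ends via the copies supplied by the unpacking above. Set $\tilde\lambda = \bigcup_i \tilde\tau_i$; this is a bi-infinite geodesic in $\tilde G$, projecting to a bi-infinite line $\lambda$ in $G$. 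The nested construction can be arranged so that $\tilde f_\#(\tilde\lambda) = \tilde\lambda$ with orientation preserved, giving $\phi$ fixes $\lambda$ preserving orientation. Moreover $\lambda$ contains $f^i_{\#\#}(\beta)$ as a subpath for every $i \ge 0$, by the bookkeeping of the nesting.

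Define $\Lambda$ to be the weak closure in $\B(G)$ of the $\phi$-orbit of $\lambda$. For the attracting neighborhood property: if $\sigma \in V(G,\beta)$ then $\sigma$ contains $\beta$, so iterating Lemma~\refGM{LemmaDoubleSharpFacts} gives $f^i_\#(\sigma) \supset f^i_{\#\#}(\beta)$ at the chosen location. Since $f^i_{\#\#}(\beta)$ contains arbitrarily long subpaths of $\lambda$ on both sides as $i \to \infty$, the sequence $f^i_\#(\sigma)$ accumulates onto $\Lambda$ in the weak topology, so $V(G,\beta)$ is an attracting neighborhood. Verification that $\Lambda \in \L(\phi)$ with $\lambda$ a generic leaf, and that every generic leaf of $\Lambda$ contains each $f^i_{\#\#}(\beta)$, then follows from weak-orbit density together with the explicit subpath containment built into $\lambda$. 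The main obstacle is the combinatorial bookkeeping for the nested lifts in $\tilde G$: carefully checking, via Lemma~\refGM{LemmaDoubleSharpFacts}, that the positional consistency of $f_{\#\#}$ lets $\tilde\tau_{i+1}$ strictly extend $\tilde\tau_i$ on both ends, and that the three-copies hypothesis --- rather than two --- supplies the symmetry in both directions needed for that nesting to continue indefinitely.
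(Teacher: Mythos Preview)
Your overall strategy matches the paper's: build nested paths in $\wt G$ centered on a fixed middle copy of $\beta$, take the union to get a line $\ti\lambda$ fixed by $\ti f_\#$, and check that $V(G,\beta)$ is an attracting neighborhood. The construction and the attracting-neighborhood argument are essentially the same as in the paper.

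There is, however, a genuine gap at the step where you write ``Verification that $\Lambda \in \L(\phi)$ with $\lambda$ a generic leaf \ldots\ follows from weak-orbit density together with the explicit subpath containment built into~$\lambda$.'' To invoke the definition of an attracting lamination (Definition~3.1.5 of \BookOne) you must know two things about $\lambda$ that you have not established:
\begin{itemize}
\item \emph{Birecurrence.} The paper checks this explicitly: each ray of $\ti\lambda$ contains a translate of $\ti\beta_i$ for every $i$, because of the way $\ti\beta_{i,L}$ and $\ti\beta_{i,R}$ flank $\ti\beta_i$ in the inductive step. Your sketch gestures at ``growing on both sides'' but never says why every finite subpath of $\lambda$ recurs in both directions.
\item \emph{Non-periodicity.} You must rule out that $\ti\lambda$ is the axis of a covering translation; otherwise its weak closure is a single circle, not an attracting lamination. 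The paper handles this with the one-line observation that the length of $\ti f_{\#\#}(\ti\beta_i)$ is at least three times the length of $\ti\beta_i$, which forces $\lambda$ to be non-periodic. This step is entirely absent from your proposal, and ``weak-orbit density'' does not supply it.
\end{itemize}
Without these two checks you cannot conclude $\Lambda \in \L(\phi)$, and hence cannot speak of generic leaves at all; the ``Furthermore'' clause then has nothing to stand on. Once you add these verifications (both short), your argument becomes essentially identical to the paper's.

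A minor stylistic point: since you have already arranged $\phi(\lambda)=\lambda$, defining $\Lambda$ as the weak closure of the $\phi$-orbit of $\lambda$ is just the weak closure of $\lambda$; say that directly.
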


\begin{proof} For any lift $\ti\beta$ of $\beta$ to the universal cover $\wt G$ and for any lift $\ti f \from \wt G \to \wt G$ of $f$, the hypothesis can be restated to say that 
$$\ti f_{\#\#}(\ti\beta) = \ti \alpha_1 \, \ti \beta_L \, \ti \alpha_2 \, \ti \beta_C \, \ti \alpha_3 \, \ti \beta_R \, \ti \alpha_4
$$
where $\ti\beta_L,\ti\beta_C,\ti\beta_R$ are translates of $\ti\beta$.

For inductive reasons we write $\beta_0 = \beta$. Choosing a lift $\ti \beta_0$ of $\beta_0$ to the universal cover~$\wt G$, there exists a lift $\ti f: \wt G \to \wt G$ of $f$ and lifts $\ti\beta_L,\ti\beta_R$ of $\ti\beta_0$ such that  
$$\ti f_{\#\#}(\ti \beta_0) = \ti \alpha_{0,1} \, \ti \beta_{0,L} \, \ti \alpha_{0,2} \, \ti \beta_{0} \, \ti \alpha_{0,3} \, \ti \beta_{0,R} \, \ti \alpha_{0,4}
$$
Define 
$$\ti \beta_1 = \ti \beta_{0,L} \, \ti \alpha_{0,2}  \, \ti \beta_{0} \, \ti \alpha_{0,3}  \, \ti \beta_{0,R} \subset \ti f_{\#\#}(\ti \beta_0). 
$$
Combining the definition of $\ti\beta_1$, the hypothesis, and Lemma~\trefGM{LemmaDoubleSharpFacts}{ItemDisjointCopies}, we may write $\ti f_{\#\#}(\ti\beta_1)$~as
$$\ti f_{\#\#}(\ti \beta_1) = \ti \alpha_{1,1}  \, \ti \beta_{1,L} \, \ti \alpha_{1,2}  \, \ti \beta_{1 \, }\ti \alpha_{1,3}  \, \ti \beta_{1,R} \, \ti \alpha_{1,4}
$$
where $\ti \beta_{1,L} \subset \ti f_{\#\#}(\ti \beta_{0,L})$ and $\ti \beta_{1,R} \subset \ti f_{\#\#}(\ti \beta_{0,R})$ are translates of $\ti \beta_1 \subset \ti f_{\#\#}(\ti \beta_0)$. Assuming by induction that 
$$\ti f_{\#\#}(\ti \beta_i) = \ti \alpha_{i,1}  \,  \ti \beta_{i,L}  \, \ti \alpha_{i,2}   \, \ti \beta_{i}  \, \ti \alpha_{i,3}  \,  \ti \beta_{i,R}  \, \ti \alpha_{i,4}
$$ 
where $\ti \beta_{i,L} \subset \ti f_{\#\#}(\ti \beta_{i-1,L})$ and $\ti \beta_{i,R} \subset \ti f_{\#\#}(\ti \beta_{i-1,R})$ are translates of $\ti \beta_i \subset \ti f_{\#\#}(\ti \beta_{i-1})$, define 
$$\ti \beta_{i+1} = \ti \beta_{i,L}\ti \alpha_{1,2} \ti \beta_{i}\ti \alpha_{i,3} \ti \beta_{i,R} \subset \ti f_{\#\#}(\ti \beta_i)
$$ 
and apply Lemma~\trefGM{LemmaDoubleSharpFacts}{ItemDisjointCopies} to complete the induction step. 

The union of the nested sequence $ \ti \beta_0 \subset \ti \beta_1\subset \ti \beta_2 \subset\cdots$ is a line $\ti\lambda \in \wt B(\wt G)$ which $\ti f_\#$ fixes preserving orientation, and so determines a line $\lambda \in \B$ which $\phi$ fixes preserving orientation. Each ray $\wt R$ in $\ti \lambda$ contains a translate of $\ti \beta_i$ for all sufficiently large $i$ and so contains a translate of $\ti \beta_i$ for all $i$. Thus $\lambda$ is birecurrent. If a line $\ti \gamma$ contains $\ti \beta_0$ as a subpath then $\ti f_\#(\ti \gamma)$ contains $\ti f_{\#\#}(\ti \beta_0)$ by Lemma~\trefGM{LemmaDoubleSharpFacts}{ItemDblSharpContain} and so contains $\ti \beta_1$. The obvious induction argument shows that $\ti f^i_\#(\ti \gamma)$ contains $\ti \beta_i$ for all $i$. This proves that $V(G,\beta) \subset \B(G) \approx \B$ is an attracting neighborhood for $\lambda$ in $\B$ with respect to the action of $\phi$. Since the length of $\ti f_{\#\#}(\ti \beta_i)$ is at least three times the length of $\ti \beta_i$, the line $\lambda$ is not the axis of a covering translation. By the definition of attracting laminations (Definition~3.1.5 of \BookOne, or see Definition~\refGM{DefAttractingLaminations}) the weak closure $\Lambda \subset \B$ of the line~$\lambda$ is an attracting lamination for $\phi$ and $\lambda$ is a generic leaf of $\Lambda$. Since $V(G,\beta)$ is an attracting neighborhood for $\lambda$, it follows that $V(G,\beta)$ is an attracting neighborhood for~$\Lambda$. 
\end{proof}

\subsection{The ping-pong argument}
\label{SectionPingPongArgument}
 
In this section we state and prove Proposition~\ref{PropSmallerComplexity}, a technical statement in which our ping-pong arguments are packaged. 

\begin{definition}\label{DefGeometricAboveF}
Given a free factor system $\F$ and a subgroup $\h \subgroup \Out(F_n)$ that preserves~$\F$, we say that \emph{$\h$ is geometric above~$\F$}, or that \emph{$\h$ is geometric relative to~$\F$}, if for each $\phi \in \h$, each nongeometric lamination pair in $\L^\pm(\phi)$ is supported by~$\F$.
\end{definition}

\begin{proposition}
\label{PropSmallerComplexity}
Consider $\F$ a (possibly empty) free factor system, rotationless $\phi,\psi \in \Out(F_n)$ that preserve $\F$, and lamination pairs $\Lambda^\pm_\phi \in \L^\pm(\phi)$, $\Lambda^\pm_\psi \in \L^\pm(\psi)$ having a generic leaf $\lambda^\pm_\phi,\lambda^\pm_\psi$ that are fixed by $\phi^\pm,\psi^\pm$, respectively, with fixed orientation. Assume also the following hypotheses:
\begin{itemize}
\item[(a)] $\F \sqsubset \A_\na\Lambda^\pm_\phi$ and $\F \sqsubset \A_\na\Lambda^\pm_\psi$;
\item[(b)] Either both pairs $\Lambda^\pm_\phi$ and $\Lambda^\pm_\psi$ are non-geometric, or the subgroup $\<\phi,\psi\> \subgroup \Out(F_n)$ is geometric above~$\F$;
\item[(i)] Generic leaves of $\Lambda^+_\psi$ are weakly attracted to $\Lambda^+_\phi$ under iteration by $\phi$. 
\item[(ii)] Generic leaves of $\Lambda^-_\psi$ are weakly attracted to $\Lambda^-_\phi$ under iteration by $\phi^{-1}$. 
\item[(iii)] Generic leaves of $\Lambda^+_\phi$ are weakly attracted to $\Lambda^+_\psi$ under iteration by $\psi$. 
\item[(iv)] Generic leaves of $\Lambda^-_\phi$ are weakly attracted to $\Lambda^-_\psi$ under iteration by $\psi^{-1}$. 
\end{itemize}
Under these hypotheses, we may conclude that there exist attracting neighborhoods $V^\pm_\phi, V^\pm_\psi$ of generic leaves of $\Lambda^\pm_\phi,\Lambda^\pm_\psi$, respectively, and there exists an integer $M$, such that for any $m,n \ge M$ the outer automorphism $\xi = \psi^m \phi^n$ has a $\xi$-invariant lamination pair $\Lambda^\pm_\xi$ such that $\Lambda^\pm_\xi$ is non-geometric if $\Lambda^\pm_\phi$ and $\Lambda^\pm_\psi$ are non-geometric, and the following hold:

\smallskip\noindent
$(1)$ $\F$ is carried by $\A_\na \Lambda^\pm_\xi$ which is carried by each of $\A_\na \Lambda^\pm_\phi$ and $\A_\na \Lambda^\pm_\psi$.

\smallskip\noindent
$(2^+)$ $\psi^m(V^+_\phi) \subset V^+_\psi$

\smallskip\noindent
$(2^-)$ $\phi^{-n}(V^-_\psi) \subset V^-_\phi$

\smallskip\noindent
$(3^+)$ $\phi^n(V^+_\psi) \subset V^+_\phi$

\smallskip\noindent
$(3^-)$ $\psi^{-m}(V^-_\phi) \subset V^-_\psi$

\smallskip\noindent
$(4^+)$ $V^+_\xi:= V^+_\psi $ is an attracting neighborhood of generic leaves of $\Lambda^+_{\xi}$.

\smallskip\noindent
$(4^-)$ $V^-_\xi := V^-_\phi$ is an attracting neighborhood of generic leaves of $\Lambda^-_\xi$.

\smallskip\noindent
$(5^+)$ For any weak neighborhood $U^+_\psi \subset \B$ of a generic leaf of $\Lambda^+_\psi$ there exists an integer $M(U^+_\psi)$ such that if $m,n \ge M(U^+_\psi)$ then a generic leaf of $\Lambda^+_\xi$ is in $U^+_\psi$.

\smallskip\noindent
$(5^-)$ For any weak neighborhood $U^-_\phi \subset \B$ of a generic leaf of $\Lambda^-_\phi$ there exists an integer $M(U^-_\phi)$ such that if $m,n \ge M(U^-_\phi)$ then a generic leaf of $\Lambda^-_\xi$ is in $U^-_\phi$.
\end{proposition}

For the proof we shall need the following lemma which says, roughly speaking, that for any line which is weakly attracted to some $\Lambda^+ \in \L(\phi)$, the realization of that line in any marked graph contains a finite segment which is uniformly attracted to $\Lambda^+$ in an appropriate sense. The proof uses the ``buffered splitting argument'', \BookOne\ Lemma~4.2.2, to obtain finite subpaths of generic leaves which survive under iteration in a very strong sense. 

\begin{lemma}
\label{LemmaUniformSegmentAttraction}
Consider $\phi \in \Out(F_n)$ and $\Lambda \in \L(\phi)$ so that~$\phi(\Lambda)=\Lambda$, a relative train track representative $f \from G \to G$ with \eg\ stratum $H_r \subset G$ corresponding to $\Lambda$, and a generic leaf $\lambda \in \Lambda$ realized in $G$. Consider also a marked graph $K$, a homotopy equivalence $h \from K \to G$ that preserves marking, and a line $\ell \in \B$ realized in $K$. If $\ell$ is weakly attracted to $\lambda$ then for any finite subpath $\tau$ of $\lambda$ in $G$ that begins and ends with edges of $H_r$ there exists $k \ge 0$ and a finite subpath $\alpha$ of $\ell$ in $K$ such that:
\begin{enumerate}
\item\label{ItemUniformLineAttraction}
For any $i \ge 0$ the path $(f^{k+i} \circ h)_{\#\#}(\alpha)$ contains $f^i_\#(\tau)$ as a subpath.
\item\label{ItemUniformSegmentAttraction}
For any $i \ge 0$ and any path $\beta$ in $K$ containing $\alpha$ as a subpath, the path $(f^{k+i} \circ h)_{\#\#}(\beta)$ contains $f^i_\#(\tau)$ as a subpath.
\end{enumerate}
\end{lemma}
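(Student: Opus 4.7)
The proof combines two pieces of relative train track technology: the \emph{buffered splitting argument} (\BookOne\ Lemma~4.2.2) applied to $\tau$ in $G$, and the weak attraction hypothesis transported back to a finite subpath of $\ell$ in $K$.

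First I would apply the buffered splitting argument to $\tau \subset \lambda$. Since $\tau$ begins and ends with edges of the EG stratum $H_r$, this produces a finite extension $\tau \subset \tau^+ \subset \lambda$ exhibiting a splitting $\tau^+ = \tau_L \cdot \tau \cdot \tau_R$ with the following persistence property: for any reduced path $\gamma$ in $G$ that contains $\tau^+$ as a subpath, and for every $i \ge 0$, the image $f^i_\#(\gamma)$ contains $f^i_\#(\tau)$ as a subpath. This is the engine that encodes the ``uniform attraction'' in the conclusion: the copy of $f^i_\#(\tau)$ inside $f^i_\#(\gamma)$ is produced by a splitting that survives both iteration of $f_\#$ and arbitrary extension of $\gamma$.

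Next I would convert the weak attraction of $\ell$ to $\lambda$ into the existence of a suitable finite $\alpha \subset \ell$. Since $\tau^+$ is a finite subpath of $\lambda$, weak attraction provides some $k \ge 0$ such that $(f^k \circ h)_\#(\ell)$ in $G$ contains $\tau^+$ as a subpath. Writing $\ell$ in $K$ as an exhaustive nested union of finite subpaths, and using the basic properties of $\#\#$ (Lemma~\refGM{LemmaDoubleSharpFacts}, in particular that $(f^k \circ h)_{\#\#}$ differs from $(f^k \circ h)_\#$ only by a bounded trim at each endpoint), I choose $\alpha \subset \ell$ large enough that $(f^k \circ h)_{\#\#}(\alpha)$ already contains $\tau^+$.

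Conclusions (1) and (2) then follow together. Given any $i \ge 0$, any $\beta \supset \alpha$ in $K$, and any further extension $\beta' \supset \beta$ in $K$, the definition of $\#\#$ gives $(f^k \circ h)_\#(\beta') \supset (f^k \circ h)_{\#\#}(\alpha) \supset \tau^+$. Applying the buffered splitting property to $\gamma := (f^k \circ h)_\#(\beta')$ then yields $(f^{k+i} \circ h)_\#(\beta') = f^i_\#(\gamma) \supset f^i_\#(\tau)$. Taking the common subpath over all extensions $\beta' \supset \beta$ produces $(f^{k+i} \circ h)_{\#\#}(\beta) \supset f^i_\#(\tau)$, which is conclusion (2); conclusion (1) is the special case $\beta = \alpha$. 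The main obstacle will be the precise form of the buffered splitting argument --- verifying that the splitting at the endpoints of $\tau$ persists under every iterate of $f_\#$ and under every extension of $\gamma$ --- which is where the EG-stratum hypothesis on both endpoints of $\tau$ enters in an essential way.
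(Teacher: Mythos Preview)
Your proposal is correct and follows essentially the same approach as the paper: buffered splitting (Lemma~4.2.2 of \BookOne) to produce the extended path $\tau^+=\tau_L\tau\tau_R$, weak attraction to find $k$ with $(f^k\circ h)_\#(\ell)\supset\tau^+$, and then a bounded-cancellation/\#\# argument to choose $\alpha$. The only cosmetic differences are that the paper invokes the bounded cancellation constant of $f^k\circ h$ explicitly when selecting $\alpha$ (rather than phrasing it through \#\#), and that the paper proves \pref{ItemUniformLineAttraction} first and deduces \pref{ItemUniformSegmentAttraction} from the \#\#~Lemma, whereas you prove \pref{ItemUniformSegmentAttraction} directly and recover \pref{ItemUniformLineAttraction} as the case $\beta=\alpha$.
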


\begin{proof} Item~\pref{ItemUniformSegmentAttraction} follows from item~\pref{ItemUniformLineAttraction} by the \#\# Lemma~\trefGM{LemmaDoubleSharpFacts}{ItemDblSharpContain}. 

Consider a finite subpath of $\lambda^+$ in $G$ of the form $\tau_- \tau \tau_+$ and consider another finite path $\gamma$ in $G$ that contains $\tau_- \tau \tau_+$ as a subpath and so can be written in the form $\gamma = \gamma_- \tau \gamma_+$ where $\tau_-$ is a terminal segment of $\gamma_-$ and $\tau_+$ is an initial segment of $\gamma_+$. By Lemma~4.2.2 of \BookOne\ there exists a constant $C_1$ (depending only on~$f$, independent of $\tau_-\tau\tau_+$ and of~$\gamma$) such that if $\tau_-,\tau_+$ each cross at least $C_1$ edges of $H_r$ then for each $i \ge 0$ the path $f^i_\#(\gamma)$ decomposes as $f^i_\#(\gamma) = f^i_\#(\gamma_-) f^i_\#(\tau) f^i_\#(\gamma_+)$. Since $\lambda^+$ is a generic leaf we may choose $\tau_-,\tau_+$ so that this is so. 

Since $\ell$ is weakly attracted to $\lambda^+$, there exists $k \ge 0$ such that the line $(f^k \circ h)_\#(\ell)$, which is the realization of $\phi^k(\ell)$ in $G$, contains $\tau_- \tau \tau_+$ as a subpath. Let $C_2$ be a bounded cancellation constant for the map $f^k \circ h \from K \to G$. Choose $\alpha$ to be a subpath of $\ell$ such that $(f^k \circ h)_\#(\alpha)$ decomposes as an initial subpath of length at least $C_2$ followed by $\tau_- \tau \tau_+$ followed by a terminal subpath of length at least $C_2$. For any subpath $\alpha'$ of $\ell$ that contains $\alpha$ as a subpath, it follows by the bounded cancellation lemma that $\gamma = (f^k \circ h)_\#(\alpha')$ contains $\tau_- \tau \tau_+$ as a subpath, and so for any $i \ge 0$ the path $(f^{k+i} \circ h)_\#(\alpha') = f^i_\#(\gamma)$ contains $f^i_\#(\tau)$ as a subpath. Since this is true for any such $\alpha'$, it follows by definition of the $\#\#$ operator that $(f^{k+i} \circ h)_{\#\#}(\alpha)$ contains $f^i_\#(\tau)$ as a subpath.
\end{proof}

\begin{proof}[Proof of Proposition \ref{PropSmallerComplexity}] As the proof proceeds, we will impose finitely many lower bounds constraining $M$; in the end we take $M$ to be the largest of these bounds. 

Choose \ct\ representatives
$$g_\phi \from G_\phi \to G_\phi \quad\text{and}\quad g_\psi \from G_\psi \to G_\psi
$$
of $\phi$ and $\psi$, respectively, in which $\F$ is realized by a filtration element. Let $H_\phi \subset G_\phi$, $H_\psi \subset G_\psi$ denote the \eg\ strata corresponding to $\Lambda^+_\phi$, $\Lambda^+_\psi$, respectively. Applying Fact~\refGM{FactAttrNhdBasis} pick an attracting neighborhood basis $\V^+_\psi = \{V(G_\psi,\beta_k)\}$ of $\Lambda^+_\psi$ where $\beta_k$ is a nested sequence of finite subpaths exhausting $\lambda^+_\psi$, and each $\beta_k$ begins and ends with edges of $H_\psi$. Similarly pick $\V^+_\phi = \{V(G_\phi,\gamma_k)\}$ with respect to $\Lambda^+_\phi$ and $\lambda^+_\phi$. Pick homotopy equivalences $h_\psi : G_\phi \to G_\psi$ and $h_\phi : G_\psi \to G_\phi$ that respect the markings. 

In the arguments to follow, we often abbreviate phrases like ``path \#1 has path \#2 as a subpath'' to ``path \#1 contains path \#2''.

Let $\tau$ be an edge of $H_\phi$. Applying Lemma~\ref{LemmaUniformSegmentAttraction} with $f=g_\phi$ and $h=\Id_{G_\phi}$ there exists a finite subpath $\alpha$ of $\lambda^+_\phi$ and an $m_0 \ge 0$ such that for all $i \ge 0$ the path $(g^{m_0+i}_\phi)_{\#\#}(\alpha)$ contains $(g^i_\phi)_\#(\tau)$---in this special case one could choose $\alpha$ so that $m_0 = 0$ but this extra precision makes no difference to the argument. By another application of Lemma~\ref{LemmaUniformSegmentAttraction}, there exists $m_1 \ge 0$ and a finite subpath $\beta$ of $\lambda^+_\psi$ such that for all $i \ge 0$ the path $(g^{m_1+i}_\phi h_\phi)_{\#\#}(\beta)$ in $G_\phi$ contains $(g^i_\phi)_\#(\alpha)$. Setting $i = m_0 + k$, for any $k \ge 0$ we have:
\begin{itemize}
\item[$(*)$] the path $(g^{m_1+m_0+k}_\phi h_\phi)_{\#\#}(\beta)$ in $G_\phi$ contains $(g^{m_0+k}_\phi)_\#(\alpha)$, which contains $(g^{m_0+k}_\phi)_{\#\#}(\alpha)$ (by the \#\#~Lemma~\refGM{LemmaDoubleSharpFacts}), which contains $(g^k_\phi)_\#(\tau)$.
\end{itemize}
Furthermore, Lemma~\ref{LemmaUniformSegmentAttraction} allows us to lengthen $\beta$ arbitrarily, and so by the description above of the attracting neighborhood basis $\V^+_\psi$ we may choose $\beta$ so that $V^+_\psi \equiv V(G_\psi,\beta) \in \V^+_\psi$ is an attracting neighborhood of $\Lambda^+_\psi$. As a consequence the path $(g^l_\psi)_\#(\beta)$ contains $\beta$ for each $\ell \ge 0$.

By a similar argument applying \emph{(iii)} and Lemma~\ref{LemmaUniformSegmentAttraction} with the roles of $\phi$ and $\psi$ reversed, there exists $m_2 \ge 0$ and a subpath $\gamma$ of $\lambda^+_{\phi}$ such that $V^+_\phi \equiv V(G_\phi,\gamma)$ is an attracting neighborhood of $\Lambda^+_\phi$, and such that for all $l \ge 0$ the path ${(g_{\psi}^{m_2+l}h_\psi})_{\#\#}(\gamma)$ in $G_\psi$ contains $(g_\psi^l)_\#(\beta)$. Note in particular that for all $m \ge m_2$ we have verified that the path ${(g_\psi^m h_\psi)}_{\#\#}(\gamma)$ contains $\beta$, and so $\psi^m(V^+_\phi) \subset V^+_\psi$ if $m \ge m_2$. We now constrain $M$ so that $M \ge m_2$ (which verifies $(2^+)$).

Since $(g^k_\phi)_\#(\tau)$ converges weakly to the birecurrent line $\lambda^+_\phi$ we may choose $m_3$ so that for all $j \ge 0$ the path $(g^{m_3+j}_{\phi})_{\#}(\tau)$ contains three disjoint copies of $\gamma$. We further constrain $M$ so that $M \ge m_0 + m_1+m_3$. For any $m,n \ge M$, we have
\begin{align*}
m &\ge m_2 \\
n &= m_1 + m_0 + \underbrace{m_3 + j}_{=k}, \quad\text{with $j \ge 0$}
\end{align*}
It follows from $(*)$ that the path ${(g_{\phi}^{n} \, h_\phi})_{\#\#}(\beta)$ contains $(g^{m_3+j}_\phi)_\#(\tau)$, which in turn contains three disjoint copies of $\gamma$ (which verifies~$(3^+)$). It follows furthermore, applying the \#\#~Lemma~\refGM{LemmaDoubleSharpFacts}, that the path $(g^m_\psi \, h_\psi \, g^n_\phi \, h_\phi)_{\#\#}(\beta)$ contains three disjoint copies of $(g^m_\psi \, h_\psi)_{\#\#}(\gamma)$ which contains three disjoint copies of $\beta$. The homotopy equivalence $f_\xi= g_{\psi}^{m} \, {h_\psi} \, g_{\phi}^{n} \, {h_\phi}: G_{\psi} \to G_{\psi}$ represents $\xi = \psi^m \phi^n$, and by applying Lemma~\ref{FindingEG} it follows that $V^+_\xi := V^+_\psi= V(G_\psi,\beta)$ is an attracting neighborhood of an attracting lamination $\Lambda_\xi^+$ of $\xi$ (which verifies~$(4^+)$). 

To verify $(5^+)$, in the previous paragraph we could have taken $m=M$; it follows that $(g_{\psi}^{M} \, {h_\psi} \, g_{\phi}^{n} \, {h_\phi})_{\#\#}(\beta)$ contains $\beta$. For arbitrary $m=M+l \ge M$ it follows that $(f_\xi)_{\#\#}(\beta)$ contains $(g^l_\psi)_{\#\#}(g_{\psi}^{M} \, {h_\psi} \, g_{\phi}^{n} \, {h_\phi})_{\#\#}(\beta)$ (by the $\#\#$-Lemma~\trefGM{LemmaDoubleSharpFacts}{ItemDblSharpComp}), which contains $(g^l_\psi)_{\#\#}(\beta)$. Lemma~\ref{FindingEG} has the additional conclusion that a generic leaf $\lambda^+_\xi$ of $\Lambda^+_\xi$ contains $(f_\xi)_{\#\#}(\beta)$, and so $\lambda^+_\xi$ also contains $(g^l_\psi)_{\#\#}(\beta)$. Since $V(G_\psi,\beta)$ is an attracting neighborhood for $\Lambda^+_\psi$, for any $U^+_\psi$ as in $(5^+)$ it follows that if $l$ is sufficiently large, say $l \ge L(U^+_\psi)$, then any line containing $(g^l_\psi)_{\#\#}(\beta)$ is in $U^+_\psi$. In particular $\lambda^+_\xi \in U^+_\psi$, and so $(5^+)$ follows with $M(U^+_\psi) = M + L(U^+_\psi)$.

We have verified that $\xi$ and $\Lambda^+_\xi$ satisfy properties~$(2^+), (3^+), (4^+)$ and $(5^+)$. By similar arguments, with the roles of $\phi,\psi$ played by $\psi^\inv$, $\phi^\inv$ respectively, applying \emph{(ii), (iv)} in place of \emph{(i), (iii)}, and after constraining $M$ with further lower bounds as necessary, we obtain attracting neighborhoods $V^-_\phi \subset U^-_\phi$ of $\Lambda^-_\phi$ and $V^-_\psi \subset U^-_\psi$ of $\Lambda^-_\psi$ so that if $m,n \ge M$ then $\xi^\inv = \phi^{-n} \psi^{-m}$ has an attracting lamination $\Lambda^-_\xi$ that satisfies~$(2^-), (3^-), (4^-)$ and $(5^-)$.

By item~\prefWA{ItemUniformOWOTO} of Corollary~\refWA{CorOneWayOrTheOther} (Theorem~H) there exists $m_4$ so that if $\nu$ is a line that is neither an element of $V^-_\phi = V^-_\xi$ nor carried by $\A_\na \Lambda^\pm_\phi$ then $\phi^k_\#(\nu) \in V^+_\phi$ for all $k \ge m_4$. Imposing the further constraint $M \ge m_4$, it follows that $\xi_\#(\nu) = \psi^m_\# \phi^n_\#(\nu) \in \psi^m_\#(V^+_\phi) \subset V^+_\psi = V^+_\xi$ which proves that $\nu$ is weakly attracted to~$\Lambda_\xi^+$. We will use this in the following form: every line that is not contained in $V^-_\xi$ and is not weakly attracted to $\Lambda^+_\xi$ under iteration of $\xi$ is carried by $\A_\na\Lambda^\pm_\phi$. 
 
By a completely symmetric argument we may assume, after constraining $M$ with further lower bounds as necessary, that every line that is not contained in $V^+_\xi$ and is not weakly attracted to $\Lambda^-_\xi$ under iteration of $\xi^\inv$ is carried by $\A_\na\Lambda^\pm_\psi$. 

This completes the description of all lower bounds constraining $M$. We note that these lower bounds are all determined by the choices of \cts\ representing $\phi^{\pm 1}$ and $\psi^{\pm 1}$ and by choices of homotopy equivalences preserving marking amongst the domains of those \cts. We now work on various pieces of the proof, couched as properties of the construction. Note that at this point of the proof we do yet not know whether $\Lambda^+_\xi,\Lambda^-_\xi$ form a dual lamination pair for~$\xi$.

\medskip

Note that every line that is contained in $V^+_\xi$ is weakly attracted to $\Lambda^+_\xi$, and every line contained in $V^-_\xi$ is weakly attracted to $\Lambda^-_\xi$. As a consequence we have shown:
\begin{description}
\item[(A)] Every line that is weakly attracted to neither $\Lambda_\xi^+$ nor $\Lambda_\xi^-$ is disjoint from both $V^+_\xi$ and $V^-_\xi$ and so is carried by both $\A_\na\Lambda^\pm_\phi$ and $\A_\na\Lambda^\pm_\psi$. 
\item[(B)] Restricting to periodic lines, every conjugacy class carried by both $\A_\na \Lambda^+_\xi $ and $\A_\na \Lambda^-_\xi $ is carried by both $\A_\na \Lambda^\pm_\phi$ and $\A_\na \Lambda^\pm_\psi$. 
\end{description}
Since a generic leaf of $\Lambda^+_\xi$ realized in $G_\psi$ contains the path $\beta$ which begins and ends with edges of $H_\psi$, and since the filtration element of $G_\psi$ corresponding to $\F$ is below the stratum~$H_\psi$, it follows that a generic leaf of $\Lambda^+_\xi$ is not carried by $\F$. Since $\F$ is fixed by~$\phi$ and~$\psi$, it is also fixed by $\xi = \psi^m \phi^n$, and so the closure of the $\xi$-orbit of any conjugacy class supported by~$\F$ is also supported by~$\F$ and therefore does not contain a generic leaf of $\Lambda^+_\xi$. It follows that no conjugacy class supported by $\F$ is weakly attracted to $\Lambda^+_\xi$ under iteration by $\xi$, and so $\F \sqsubset \A_\na(\Lambda^+_\xi)$. A similar argument applies to $\A_\na(\Lambda^-_\xi)$, and we have shown:
\begin{description}
\item[(C)] $\F$ is carried by both $\A_\na \Lambda^+_\xi$ and $\A_\na\Lambda^-_\xi$. It follows that neither $\Lambda^+_\xi$ nor $\Lambda^-_\xi$ is carried by~$\F$.
\end{description}

Next we turn to some conditional properties of the construction, which can be thought of as pieces of a case analysis of the proof.
\begin{description}
\item[(D)] If both lamination pairs $\Lambda^\pm_\phi$ and $\Lambda^\pm_\psi$ are nongeometric then $\Lambda^+_\xi$ and $\Lambda^-_\xi$ are nongeometric laminations.
\end{description}
Suppose that $\Lambda^\pm_\phi$ and $\Lambda^\pm_\psi$ are nongeometric. Arguing by contradiction, if $\Lambda^+_\xi$ is geometric then by Proposition~\refGM{PropGeomEquiv} there is a finite set of $\xi$-invariant conjugacy classes whose free factor support carries $\Lambda^+_\xi$.  Every $\xi$-invariant conjugacy class is carried by both $\A_\na \Lambda^+_\xi$ and $\A_\na \Lambda^-_\xi$, and hence by (B) is also carried by $\A_\na\Lambda^\pm_\psi$, which is a free factor system since $\Lambda^\pm_\psi$ is nongeometric. It follows that $\Lambda^+_\xi$ is carried by $\A_\na \Lambda^\pm_\psi$. But the realization of a generic leaf of $\Lambda_\xi^+$ in $G_\psi$ contains $\beta$, and so $\Lambda^+_\xi$ is not carried by $\A_\na\Lambda^\pm_\psi$, a contradiction. If $\Lambda^-_\xi$ is geometric then a similar contradiction holds using $\A_\na\Lambda^\pm_\phi$.

Next we show:
\begin{description}
\item[(E)] If both pairs $\Lambda^\pm_\phi$ and $\Lambda^\pm_\psi$ are both nongeometric, or if both laminations $\Lambda^+_\xi$ and $\Lambda^-_\xi$ are geometric, then $\Lambda^+_\xi$ and $\Lambda^-_\xi$ are a dual lamination pair.
\end{description}
Consider the subset of $\L^\pm(\xi)$ consisting of all lamination pairs for $\xi$ that are not supported by~$\F$; let this subset be indexed as $\{\Lambda^\pm_i\}_{i \in I}$. Assuming $\Lambda^+_\xi$ and $\Lambda^-_\xi$ are not dual we have 
$$\Lambda^+_\xi = \Lambda^+_i \quad\text{and}\quad \Lambda^-_\xi = \Lambda^-_j \quad\text{for some}\quad i \ne j \in I
$$
From this we derive a contradiction. There are two cases, depending on whether $\Lambda^+_i \subset \Lambda^+_j$.   

\smallskip

\textbf{Case 1: $\Lambda^+_i \not\subset \Lambda^+_j$.} Since $\Lambda^+_j$ is weakly closed and $\xi$-invariant, it follows that a generic leaf $\sigma \in\Lambda^+_j$  is not weakly attracted to $\Lambda^+_i=\Lambda^+_\xi$ by forward iteration of~$\xi$. Lemma~\trefWA{LemmaThreeNASets}{ItemGenNA} implies that $\sigma$ is also not weakly attracted to $\Lambda^-_j=\Lambda^-_\xi$ by backward iteration of~$\xi$. It follows by (A) that $\sigma$ is carried by $\A_\na\Lambda^\pm_\phi$, and so the lamination $\Lambda^+_j$ is carried by $\A_\na\Lambda^\pm_\phi$ (by Fact~\trefGM{FactLinesClosed}{ItemLinesClosedOneGp}).

In the subcase that the pair $\Lambda^\pm_\phi$ is nongeometric, $\A_\na\Lambda^\pm_\phi$ is a free factor system, and since $\F_\supp(\Lambda^+_j) = \F_\supp(\Lambda^-_j)$ it follows that $\Lambda^-_j$ is carried by $\A_\na\Lambda^\pm_\phi$. 

In the subcase that the pair $\Lambda^\pm_\phi$ is geometric, $\A_\na\Lambda^\pm_\phi$ is not a free factor system, but it is malnormal (Proposition~\trefWA{PropVerySmallTree}{ItemA_naMalnormal}). By hypothesis of (E) the lamination $\Lambda^+_j$ is geometric.
We have verified the hypotheses of Fact~\refGM{FactSpanArgument}, the conclusion of which says that $\Lambda^-_j$ is carried by $\A_\na\Lambda^\pm_\phi$.

In either case, we have proved that any generic leaf $\tau$ of $\Lambda^-_j = \Lambda^-_\xi$ is carried by $\A_\na\Lambda^\pm_\phi$, contradicting that $\tau$ is contained in $V_\xi^- =V_\phi^-$ which is an attracting neighborhood of $\Lambda^-_\phi$. 

\smallskip

\textbf{Case 2: $\Lambda^+_i \subset \Lambda^+_j$.} By Lemma~\refGM{containmentSymmetry} we have an inclusion $\Lambda^-_i \subset \Lambda^-_j$. Since $\Lambda^-_i, \Lambda^-_j$ are distinct elements of $\L(\xi^\inv)$, this inclusion is proper, and so $\Lambda^-_j \not\subset \Lambda^-_i$. Since $\Lambda^-_i$ is weakly closed and $\xi$-invariant, it follows that a generic leaf $\sigma$ of $\Lambda^-_i$ is not weakly attracted to $\Lambda^-_j=\Lambda^-_\xi$ under iteration of~$\xi^\inv$.  Lemma~\trefWA{LemmaThreeNASets}{ItemGenNA} implies that $\sigma$ is not weakly attracted to $\Lambda^+_i=\Lambda^+_\xi$ under iteration of~$\xi$. It follows by (A) that $\sigma$ is carried by $\A_\na\Lambda^\pm_\psi$, and so $\Lambda^-_i$ is carried by $\A_\na\Lambda^\pm_\psi$. As in Case~1, arguing in two subcases depending on geometricity of $\Lambda^\pm_\psi$, and applying Fact~\refGM{FactSpanArgument} in the geometric case, we conclude that $\Lambda^+_i = \Lambda^+_\xi$ is carried by $\A_\na\Lambda^\pm_\psi$. A generic leaf $\tau$ of $\Lambda_\xi^+$ is therefore carried by $\A_\na\Lambda^\pm_\psi$, contradicting that $\tau$ is contained in $V_\xi^+=V_\psi^+$ which is an attracting neighborhood of $\Lambda^+_\psi$. 

The last piece is to show:
\begin{description}
\item[(F)] If $\Lambda^+_\xi$ and $\Lambda^-_\xi$ do form a dual lamination pair for $\xi$ then all the conclusions of Proposition~\ref{PropSmallerComplexity} hold. 
\end{description}
To prove (F), assuming $\Lambda^+_\xi$ and $\Lambda^-_\xi$ are a dual lamination pair it follows by Corollary~\refWA{CorPMna} that $\A_\na\Lambda^-_\xi = \A_\na\Lambda^+_\xi$, which is denoted $\A_\na\Lambda^\pm_\xi$. Item~(D) clearly shows that the lamination pair $\Lambda^\pm_\xi$ is nongeometric if the pairs $\Lambda^\pm_\phi$ and $\Lambda^\pm_\psi$ are nongeometric. The only part of Proposition~\ref{PropSmallerComplexity} yet to be proved is conclusion~(1). Since the set of lines carried by any subgroup system is the closure of the axes of the conjugacy classes that it carries, it follows from (B) that $\A_\na\Lambda^\pm_\xi$ is carried by both $\A_\na\Lambda^\pm_\phi$ and $\A_\na\Lambda^\pm_\psi$, and (C) says that $\F$ is carried by $\A_\na\Lambda^\pm_\xi$, which proves~(1). 

\begin{remark} 
\label{RemarkABCDE}
Up to this point of the proof we have made no use of hypothesis (b) nor have we used any knowledge of whether the statement ``$\Lambda^+_\xi$, $\Lambda^-_\xi$ form a dual lamination pair'' is true, that statement appearing only as a hypothesis in~(F). In particular, (A), (B), (C), (D), (E) and (F) all hold without assuming hypothesis~(b). See Proposition~\ref{PropSmallerComplexityWeak} below.
\end{remark}

\medskip

All that remains is to apply hypothesis~(b), saying that either both pairs $\Lambda^\pm_\phi$ and $\Lambda^\pm_\psi$ are nongeometric or both laminations $\Lambda^-_\xi$, $\Lambda^+_\xi$ are geometric. We may therefore apply~(E), concluding that $\Lambda^-_\xi,\Lambda^+_\xi$ are a dual lamination pair; and we may therefore apply (F), which tells us that all the conclusions of Proposition~\ref{PropSmallerComplexity} hold, completing the proof.
\end{proof}

\begin{remark} The proof that $\Lambda^\pm_\xi \in \L^\pm(\xi)$ is a dual lamination pair depends on applying Fact~\refGM{FactSpanArgument} to a certain lamination pairs and nonattracting subgroup systems: $\Lambda^\pm_j \in \L^\pm(\xi)$ and $\A_\na(\Lambda^\pm_\phi)$ in Case~1; and $\Lambda^\pm_i \in \L^\pm(\xi)$ and $\A_\na(\Lambda^\pm_\psi)$ in Case~2. Other cases where this application falls apart are ruled out by hypothesis (b) of Proposition~\ref{PropSmallerComplexity}, or by the weaker hypotheses of implication (E). The proof of duality of $\Lambda^+_\xi,\Lambda^-_\xi$ in the case that all relevant lamination pairs are geometric, by applying Fact~\refGM{FactSpanArgument}, ultimately depends on the span construction of \BookOne\ Lemma~7.0.7. We know of no analogue of the span construction for nongeometric laminations, and we do not know if the duality of $\Lambda^+_\xi$ and $\Lambda^-_\xi$ would hold without the hypotheses of implication~(E). 

Nonetheless, as Remark~\ref{RemarkABCDE} shows we obtain many of the conclusions of Proposition~\ref{PropSmallerComplexity} without assuming hypothesis (b), and we collect these conclusions here for use elsewhere:
\end{remark}

\begin{proposition}
\label{PropSmallerComplexityWeak}
Given $\F$, $\phi$, $\psi$, $\Lambda^\pm_\phi$, and $\Lambda^\pm_\psi$ as in Proposition~\ref{PropSmallerComplexity}, and assuming hypotheses (a), (i)--(iv) of that proposition (but \emph{not assuming} hypothesis (b)), the following partial conclusions of the proposition hold:  There exist attracting neighborhoods $V^\pm_\phi, V^\pm_\psi$ of $\Lambda^\pm_\phi,\Lambda^\pm_\psi$, respectively, and there exists an integer $M$, such that for any $m,n \ge M$ the outer automorphism $\xi = \psi^m \phi^n$ has a $\xi$-invariant attracting lamination $\Lambda^+_\xi \in \L(\xi)$ and a $\xi$-invariant repelling lamination $\Lambda^-_\xi \in \L(\xi^\inv)$ such that 
$\Lambda^-_\xi$ and $\Lambda^+_\xi$ are both non-geometric if $\Lambda^\pm_\phi$ and $\Lambda^\pm_\psi$ are non-geometric, and 
the following hold:
\begin{description}
\item[$(1^*)$] $\F$ is carried by both $\A_\na \Lambda^+_\xi$ and $\A_\na\Lambda^-_\xi$, and so neither $\Lambda^+_\xi$ nor $\Lambda^-_\xi$ is carried by~$\F$. Also, each of $\A_\na \Lambda^+_\xi$ and $\A_\na \Lambda^-_\xi$ is carried by both $\A_\na\Lambda^\pm_\phi$ and $\A_\na\Lambda^\pm_\psi$. 
\item[$(2^\pm)$, $(3^\pm)$, $(4^\pm)$, and $(5^\pm)$] hold as stated.
\end{description}
Furthermore,
\begin{description}
\item[$(6)$] \emph{All of} the conclusions of Proposition~\ref{PropSmallerComplexity} hold, including that $\Lambda^+_\xi$ and $\Lambda^-_\xi$ are a dual lamination pair of $\xi$, under any of the following conditions: 
\begin{itemize}
\item $\Lambda^\pm_\phi$ and $\Lambda^\pm_\psi$ are both nongeometric; or 
\item $\Lambda^+_\xi$ and $\Lambda^-_\xi$ are both geometric; or 
\item $\Lambda^-_\xi$ and $\Lambda^+_\xi$ are a dual lamination pair of $\xi$.
\end{itemize}
\end{description}
\end{proposition}

\section{Proof of the Main Theorem C}
\label{SectionWholeShebang}

In Section~\ref{SectionReduction} we shall reduce Theorem~C to its special case Theorem~I. The statements of Theorems C and I are found the Introduction of this series \Intro; stronger versions are found at the beginning of this Part~IV. Section~\ref{SectionConstructingConjugator} contains the construction of conjugators needed for application to ping-pong arguments. Section~\ref{SectionProofUnivAttr} contains the argument used to drive down the nonattracting subgroup system of an attracting lamination pair, by applying ping-pong. Section~\ref{SectionLooking} contains the argument used to drive up the relative free factor support of an attracting lamination pair, again by applying ping-pong, and also puts the pieces together to prove Theorem~I. Section~\ref{SectionRelGeomIrr} contains the general statement and the proof of Theorem~J, the absolute case of which was stated in the Introduction of this series \Intro.

\subsection{Reduction to Theorem I.}
\label{SectionReduction}

For proving Theorem~C, consider a subgroup $\h \subgroup \IA_n(\Z/3)$ and an $\h$-invariant multi-edge extension $\F \sqsubset \F'$ relative to which $\h$ is irreducible. It follows by Lemma~\refRK{LemmaFFSComponent} that each component of $\F$ and of $\F'$ is $\H$-invariant. 

We claim that there exists exactly one component $[F'] \in \F'$ that is not a component of~$\F$. The existence of at least one such component follows because the extension $\F \sqsubset \F'$ is proper. Suppose that there are two such components $[F'_1] \ne [F'_2]$. Letting $\F_1$ be the set of components $[F] \in \F$ such that $[F] \sqsubset [F'_1]$, it follows that the free factor system $(\F - \F_1) \union \{[F'_1]\}$ is $\H$-invariant, and it is nested strictly between $\F$ and $\F'$ because $[F'_1]$ is a component but $[F'_2]$ is not. This contradicts that $\H$ is irreducible relative to $\F \sqsubset \F'$. 

Consider $\wh\A = \F \meet [F']$, which equals the maximal subset of~$\F$ such that $\wh\A \sqsubset [F']$. We may represent $\wh\A = \{[A_1],\ldots,[A_K]\}$ where $A_k \subgroup F'$ for each $k$. Letting $[\cdot]'$ denote conjugacy classes in the group $F'$, it follows that $\A = \{[A_1]',\ldots,[A_K]'\}$ is a free factor system in $F'$ and that $\A \sqsubset \{[F']'\}$ is a multi-edge extension. Let $\wh\H \subgroup \Out(F')$ be the image of $\H$ under the restriction homomorphism $\Stab_{\Out(F_n)}(\F') \to \Out(F')$ (see Fact~\refGM{FactMalnormalRestriction}). By construction $\A$ is $\wh\h$-invariant, and $\wh\h$ is irreducible relative to~$\A$. Also, since $H_1(F';\Z/3)$ is an $\h$-invariant free factor of $H_1(F_n;\Z/3)$ and since $\h$ acts trivially on $H_1(F_n;\Z/3)$ it follows that $\wh\h$ acts trivially on $H_1(F';\Z/3)$. Since $\h$ is either finitely generated or some lamination pair of some element of $\h$ is carried by $\F'$ but not by $\F$, it follows that $\wh\h$ is either finitely generated or some lamination pair of some element of $\wh\h$ is not carried by~$\A$. Theorem~I produces some $\hat\phi \in \wh\H$ that is fully irreducible relative to~$\A$, and any of its pre-images $\phi \in \H$ is fully irreducible relative to $\F \sqsubset \F'$, completing the reduction.

\subsection{Constructing a conjugator}
\label{SectionConstructingConjugator}
Ping pong arguments often use two group elements $\phi,\psi$ which are ``independent'' in some sense which guarantees that words in high powers of $\phi$ and $\psi$ produce other interesting group elements. ``Independence'' means different things in different contexts, depending on the application. When the ambient group is acting on $\hyp^n$, independence might mean that $\phi,\psi$ are loxodromic and their axes have disjoint endpoints. When the ambient group is the mapping class group of a surface, independence might mean that the stable/unstable laminations of the pseudo-Anosov components of $\phi$ and $\psi$ are mutually transverse and fill the surface.

Often one is handed only $\phi$, and $\psi$ is then constructed as a conjugate $\psi = \zeta \phi \zeta^\inv$. In order to guarantee that $\phi$ and $\psi$ are ``independent'', the conjugating element $\zeta$ must somehow move $\phi$ ``away from itself'' or make $\phi$ ``transverse to itself''. Examples of this train of thought can be seen in the proof of the Tits alternative in various settings \BookOne, \cite{McCarthy:Tits}, \cite{Ivanov:subgroups} and in the proofs of subgroup decomposition theorems for mapping class groups \cite{Ivanov:subgroups}.

Here is our conjugator constructor lemma, which starts with an element $\phi \in \IA_n(\Z/3)$ and a lamination pair. Under a certain group theoretic hypothesis, the conclusion states the existence of a conjugator $\zeta$ satisfying several properties \pref{ItemNotIntoANA}--\pref{ItemMinusNotToPlus} which in some sense describe how $\zeta$ ``moves $\phi$ away from itself'' or ``makes $\phi$ transverse to itself''. The proof of this lemma borrows heavily from the proof of Lemma~7.0.3 of \BookOne, which plays a similar role in the ping-pong argument of \BookOne\ Proposition~7.0.2.

\begin{lemma}
\label{LemmaConjugatorConstructor}
Given $\h \subgroup \IA_n(\Z/3)$, $\phi \in \h$, and a lamination pair $\Lambda^\pm_\phi \in \L^\pm_\phi$ with generic leaves $\lambda^\pm_\phi$, respectively, there exists $\zeta \in \h$ such that the following hold:
\begin{enumerate}
\item\label{ItemNotIntoANA}
None of the lines $\zeta(\lambda^+_\phi)$, $\zeta(\lambda^-_\phi)$, $\zeta^\inv(\lambda^+_\phi)$, $\zeta^\inv(\lambda^-_\phi)$ is carried by $\A_\na \Lambda^\pm_\phi$;
\item\label{ItemPlusNotToMinus}
$\zeta(\Lambda^+_\phi) \ne \Lambda^-_\phi$;
\item\label{ItemMinusNotToPlus}
$\zeta(\Lambda^-_\phi) \ne \Lambda^+_\phi$;
\item\label{ItemANANotIntoItself}
If $\Stab_\h(\A_\na\Lambda^\pm_\phi)$ has infinite index in $\h$ then $\zeta(\A_\na\Lambda^\pm_\phi) \ne \A_\na \Lambda^\pm_\phi$
\end{enumerate}
\end{lemma}

\begin{proof} We quickly reduce the proof to two sublemmas which we afterwards prove. The first sublemma establishes~\pref{ItemNotIntoANA}. Its proof will depend upon the proof of Lemma~7.0.3 of \BookOne.  
\begin{description}
\item[First Sublemma:] There exists a finite index subgroup $\h_0 \subgroup \h $ such that for any $\zeta \in \h_0$, neither $\zeta(\lambda^+_\phi)$ nor $\zeta(\lambda^-_\phi)$ is carried by $\A_\na\Lambda^\pm_\phi$.
\end{description}
\noindent
Applying the First Sublemma, and after passing to a further finite index subgroup of $\h$ still called $\h_0$, we may assume that \pref{ItemNotIntoANA} holds for any $\zeta \in \h_0$, and that either that $\h_0 \subgroup \Stab(\Lambda^+_\phi)$ or that $\Stab_{\h_0}(\Lambda^+_\phi) = \h_0 \intersect \Stab(\Lambda^+_\phi)$ has infinite index in $\h_0$. Notice that in proving \pref{ItemPlusNotToMinus} we need only be concerned with the case that $\Stab_{\h_0}(\Lambda^+_\phi)$ has infinite index, because if $\h_0 \subgroup \Stab(\Lambda^+_\phi)$ then for any choice of $\zeta \in \h_0$ item \pref{ItemPlusNotToMinus} obviously holds, because $\Lambda^+_\phi \ne \Lambda^-_\phi$. Similarly, passing to still a further finite index subgroup, we may assume either that $\h_0 \subgroup \Stab(\Lambda^-_\phi)$ or that $\Stab_{\h_0}(\Lambda^-_\phi) = \h_0 \intersect \Stab(\Lambda^-_\phi)$ has infinite index in $\h_0$, and in proving item \pref{ItemMinusNotToPlus} we need only be concerned about the infinite index case. 

The second sublemma is a simple result about group actions on sets:

\begin{description}
\item[Second Sublemma:] 
Consider a group $H$. Suppose that $H$ acts on sets $X_1,\ldots,X_M$ and that $x_m \in X_m$ are points whose stabilizers in $H$ have infinite index. Then there is an infinite sequence $g_1,g_2,\ldots \in H$ satisfying the following: for all $1 \le m \le M$ and for all $k \ne l \ge 1$ we have $g_k(x_m) \ne g_l(x_m)$. Equivalently, for any finite set of infinite index subgroups $\{S_1,\ldots,S_M\}$ of $H$ there is an infinite subset of $H$ any two elements of which lie in distinct left cosets of each of $S_1,\ldots,S_M$.
\end{description}

We prove Lemma~\ref{LemmaConjugatorConstructor} as follows. The group $\h_0$ acts on the set $C(\B) = \{\text{closed subsets of~$\B$}\}$ and the two laminations $\Lambda^+_\phi$, $\Lambda^-_\phi$ are each elements of $C(\B)$. Consider the stabilizer subgroups $\Stab_{\h_0}(\Lambda^+_\phi)$, $\Stab_{\h_0}(\Lambda^-_\phi)$, and $\Stab_{\h_0}(\A_\na(\Lambda^\pm_\phi))$. Applying the Second Sublemma to the whichever of these three subgroups has infinite index, we obtain $\zeta \in \h_0$ satisfying conclusions \pref{ItemPlusNotToMinus}, \pref{ItemMinusNotToPlus} and~\pref{ItemANANotIntoItself}.
\end{proof}

\begin{proof}[Proof of First Sublemma] Passing to a positive power we assume $\phi$ is rotationless. Let $\fG$ be a \ct\ representing $\phi$ with \eg\ stratum $H_r$ corresponding to $\Lambda^+_\phi$ chosen so that $[G_r] = \F_\supp(\Lambda^\pm_\phi)$. Recall the following notions from Definition~\refWA{defn:Z}: the subgraph $Z \subset G$, the path $\hat \rho_r$ which is a trivial path if $H_r$ is nongeometric and is the unique closed height~$r$ Nielsen path $\rho_r$ if $H_r$ is geometric, and the  subset $\<Z,\hat\rho_r\> \subset \wh\B(G)$ consisting of all lines which are concatenations of edges of $Z$ and copies of $\rho_r$. Recall also from Lemma~\trefWA{LemmaZPClosed}{item:ZP=NA} and Corollary~\refWA{CorPMna} that a line is in the set $\<Z,\hat\rho_r\>$ if and only if it is carried by $\A_\na(\Lambda^\pm_\phi)$. It therefore suffices to prove the following statement: 
\begin{itemize}
\item There exists a finite index subgroup $\h_0 \subgroup \h$ such that for any $\theta \in \h_0$ and any generic lines $\lambda^\pm_\phi$ for~$\Lambda^\pm_\phi$, the realizations of $\theta(\lambda^+_\phi)$ and $\theta(\lambda^-_\phi)$ in $G$ are not in the subset~$\<Z,\hat\rho_r\>$. 
\end{itemize}
Lemma~7.0.3 of \BookOne, proved on pages 615--620 of \BookOne, is the special case of this statement under the additional hypothesis that the lamination pair $\Lambda^\pm_\phi$ is topmost in $\L^\pm(\phi)$, that being a requirement for defining the subgraph $Z$ in \BookOne. But the proof given there works exactly as stated in our present setting, with the following minor changes. One uses our general definition of $Z$ given in Definition~\refWA{defn:Z}, rather than the special definition in the ``topmost'' case given in the proof of \BookOne\ Proposition 6.0.4; the only property of $Z$ needed to make the proof of Lemma~7.0.3 work is that $Z \intersect G_r = G_{r-1}$, which holds here as it does in \BookOne. And in the geometric case: one uses our (strong) geometric model for $f$ and $H_r$ given in Definition~\refGM{DefGeomModel}, rather than the weak geometric model which suffices for the topmost case; and one uses our generalized span argument contained in Fact~\refGM{FactSpanArgument} (together with malnormality of $\A_\na(\Lambda^\pm_\phi)$ proved in Proposition~\trefWA{PropVerySmallTree}{ItemA_naMalnormal}), in place of the topmost case of the span argument contained in \BookOne\ Corollary 7.0.8.
\end{proof}

%

\begin{proof}[Proof of Second Sublemma] The proof is by induction on $M$ with the $M=1$ case being an immediate consequence of the assumption that the stabilizer of $x_1$ in $H$ has infinite index. 

For the inductive step, assume that there is an infinite sequence $\hat g_1, \hat g_2, \ldots \in H$ such that $\hat g_k(x_m) \ne \hat g_l(x_m)$ for $1 \le m \le M-1$ and for $k \ne l$. If $\{\hat g_l(x_M)\}$ is an infinite set then, after passing to a subsequence, we may assume that $\hat g_k(x_M) \ne \hat g_l(x_M)$ for $k \ne l$. In this case we define $g_k = \hat g_k$ for all~$k$. 
 
We may therefore assume, after passing to a subsequence, that $\hat g_k(x_M) = \bar x_M$ is independent of $k$. Since the $H$-orbit of $\bar x_M$ equals the $H$-orbit of $x_M$, there is an infinite sequence $\{h_s\}$ in $H$ such that $h_s(\bar x_M) \ne h_t(\bar x_M)$ for $s \ne t$. 

We define by induction an increasing function $\alpha \from \N \to \N$ such that $h_{J} \hat g_{\alpha(J)}(x_m) \ne h_j \hat g_{\alpha(j)}(x_m)$ for $j<J$ and $1 \le m \le M-1$. Assume that $\alpha(1),\ldots,\alpha(J-1)$ are defined. For $1 \le m \le M-1$, the points $\hat g_{\alpha(J-1)+k}(x_m)$ take infinitely many values as $k \ge 1$ varies, and so the points $h_{J} \hat g_{\alpha(J-1)+k}(x_m)$ take infinitely many values. We may therefore pick $k \ge 1$ and set $\alpha(J) = \alpha(J-1)+k$ so that for $1 \le m \le M-1$, the point $h_J \hat g_{\alpha(J)}(x_m)$ is different from each of $h_1 \hat g_{\alpha(1)}(x_m),\ldots,h_{J-1} \hat g_{\alpha(J-1)}(x_m)$. This completes the definition of $\alpha$.

Setting $g_j = h_{j} \hat g_{\alpha(j)}$ completes the proof.
\end{proof}

\subsection{Driving down $\A_\na(\Lambdapmp)$}
\label{SectionProofUnivAttr}

In the setting of Theorem~I, where $\h \subgroup \IA_n(\Z/3)$ is irreducible with respect to an $\h$-invariant free factor system $\F$ such that $\F \sqsubset \{[F_n]\}$ is a multi-edge extension, and where either $\h$ is finitely generated or some lamination pair of some element of $\h$ is not supported by~$\F$, the desired conclusion is the existence of $\phi \in \h$ which is irreducible rel $\F$. From the Weak Attraction Theory developed in \PartThree, this conclusion follows if one can exhibit $\phi \in \h$ and a dual lamination pair $\Lambda^\pm_\phi \in \L^\pm_\phi$ such that that the joint free factor support of $\F$ and $\Lambda^\pm_\phi$ is ``large enough'' --- namely is equal to $\{[F_n]\}$ --- and such that the nonattracting subgroup system $\A_\na(\Lambdapmp)$ is ``small enough'' --- namely is equal to either $\F$ (in the nongeometric case) or to the union of $\F$ and a single rank~$1$ component that together with $\F$ fills $\{[F_n]\}$ (in the geometric case). 

In this section we focus on the problem of minimizing $\A_\na(\Lambdapmp)$, with particular attention on attaining the equation $\A_\na(\Lambdapmp)=\F$. Of course this equation implies that $\Lambdapmp$ is a nongeometric lamination pair and so it would be impossible to attain if it so happened that the subgroup $\h$ is geometric above~$\F$ (Definition~\ref{DefGeometricAboveF}). 

Fortunately, as the next proposition shows, in the nongeometric case one can attain the desired equation $\A_\na(\Lambdapmp)=\F$ for some $\phi$, whereas in the geometric case one can just finish off the conclusion of Theorem I in its entirety; furthermore one obtains even stronger conclusions that will be used in Theorem~J (see Section~\ref{SectionRelGeomIrr}). Recall that the vertex group system $\A_\na(\Lambda^\pm_\phi)$ is a free factor system system if and only if $\Lambda^\pm_\phi$ is nongeometric; in particular, if $\Lambda^\pm_\phi$ is geometric then $\A_\na(\Lambda^\pm_\phi)$ is properly carried by its free factor support $\F_\supp(\A_\na(\Lambda^\pm_\phi))$.


\begin{prop}[Driving down $\A_\na(\Lambda^\pm_\phi)$]\label{PropUniversallyAttracting}
Consider a subgroup $\h \subgroup \IA_n(\Z/3)$ and an $\h$-invariant free factor system $\F$ such that $\h$ is irreducible rel~$\F$. Consider also a rotationless $\phi \in \h$ and a lamination pair $\Lambda^\pm_\phi \in \L^\pm(\phi)$ that is not carried by $\F$ and that satisfies the following:

\smallskip
\emph{
\textbf{Geometricity Alternative:} If $\h$ is not geometric above $\F$ then $\Lambda^\pm_\phi$ is not geometric.
}

\medskip\noindent
Under these hypotheses, for any weak neighborhood $U \subset \B$ of a generic leaf of $\Lambda^-_\phi$ there exists $\psi \in \h$ and a lamination pair $\Lambda^\pm_\psi$ with the following properties:
\begin{enumerate}
\item\label{ItemProduceNongeometric} If $\h$ is not geometric above~$\F$ then $\Lambda^\pm_{\psi}$ is nongeometric and $\A_\na(\Lambda^\pm_\psi) = \F$.
\item\label{ItemProduceIrreducible}
If $\h$ is geometric above~$\F$ then $\psi$ is fully irreducible relative to~$\F$, and there is a maximal infinite cyclic $C \subgroup F_n$ such that 
\begin{enumeratecontinue}
\item \label{ItemPUAAnaFC}
$\A_\na\Lambda^\pm_\psi = \F \union \{[C]\}$. 
\item \label{ItemPUAFnSuppAna}
$\F_\supp(\F,\Lambda^\pm_{\psi}) = \F_\supp(\F,[C]) = \{[F_n]\}$.
\item \label{ItemPUAAnaHInvariant}
$[C]$ and $\A_\na(\Lambda^\pm_{\psi}) = \F \union \{[C]\}$ are invariant under the entire group~$\h$.
\end{enumeratecontinue}
\item  \label{item:choices} The set $U$ contains a generic leaf of $\Lambda^-_{\psi}$.
\end{enumerate}
\end{prop}

\subparagraph{Remark.} Items \pref{ItemProduceNongeometric}, \pref{ItemPUAAnaFC}, \pref{ItemPUAFnSuppAna} and \pref{item:choices} are used in our proof of Theorem~I in Section~\ref{SectionLooking}. Item~\pref{ItemPUAAnaHInvariant} is used in the proof of Theorem~J to follow in Section~\ref{SectionRelGeomIrr}.

\medskip

The proof of Proposition~\ref{PropUniversallyAttracting} will be an induction based on the chain condition for vertex group systems, Proposition~\refGM{PropVDCC}. The following proposition organizes the machinery of the inductive step into a general statement:

\begin{propositionUnivAttrInd}
For any subgroup $\h \subgroup \IA_n(\Z/3)$, any proper, $\h$-invariant free factor system $\F$ such that $\h$ is irreducible rel~$\F$, any rotationless $\phi \in \h$, and any lamination pair $\Lambda^\pm_\phi \in \L^\pm(\phi)$ not carried by~$\F$ such that $\h$, $\F$, and $\Lambda^\pm_\phi$ obey the geometricity alternative, the following hold: 
\begin{enumerate}
\item\label{ItemUAIExists} There exists $\zeta \in \cH$ satisfying the conclusions of Lemma~\ref{LemmaConjugatorConstructor}.
\item\label{ItemIUAForAny}
For any $\zeta \in \cH$ satisfying the conclusions of Lemma~\ref{LemmaConjugatorConstructor}, for any generic leaves $\gamma^\pm_\phi$ of $\Lambda^\pm_\phi$ that are fixed by $\phi$ with fixed orientation, for any weak neighborhood $U^- \subset \B$ of $\gamma^-_\phi$, and for any weak neighborhood $U^+ \subset \B$ of $\zeta(\gamma^+_\phi)$, there exists an integer $M \ge 1$ such that for any $m,n \ge M$, the outer automorphism $\xi = (\zeta \phi \zeta^\inv)^m \phi^n$ has a lamination pair $\Lambda^\pm_\xi \in \L^\pm(\xi)$ satisfying the following:
\begin{enumerate}
\item $\F \sqsubset \A_\na(\Lambda^\pm_\xi) \sqsubset \A_\na(\Lambda^\pm_\phi)$, and if $\A_\na\Lambda^\pm_\phi$ has infinite index in $\cH$ then the second inclusion is proper.
\item $\Lambda^\pm_\xi$ is nongeometric if $\Lambda^\pm_\phi$ is nongeometric, and therefore $\h$, $\F$, and $\Lambda^\pm_\xi$ continue to obey the Geometricity Alternative.
\item $U^-$ contains a generic leaf of $\Lambda^-_\xi$ and $U^+$ contains a generic leaf of $\Lambda^+_\xi$.
\end{enumerate}
\end{enumerate}
\end{propositionUnivAttrInd}

\begin{proof} Item~\pref{ItemUAIExists} holds since Lemma~\ref{LemmaConjugatorConstructor} does apply. Choose $\zeta$, $\gamma^\pm_\phi$, and $U^\pm$ as in item~\pref{ItemIUAForAny}. Define $\psi = \zeta \phi \zeta^\inv$, and so we have a dual lamination pair $\Lambda^\pm_\psi = \zeta(\Lambda^\pm_\phi) \in \L^\pm(\psi)$ with generic lines $\gamma^+_\psi = \zeta(\gamma^+_\phi)$ and $\gamma^-_\psi = \zeta(\gamma^-_\phi)$ that are fixed by $\psi,\psi^\inv$ respectively with fixed orientations. The nonattracting subgroup system of this lamination pair satisfies 
$$\F = \psi(\F) \sqsubset  \A_\na\Lambda^\pm_\psi = \zeta(\A_\na\Lambda^\pm_\phi)
$$
and so by Lemma~\ref{LemmaConjugatorConstructor}~\pref{ItemANANotIntoItself} we have $\A_\na\Lambda^\pm_\psi \ne \A_\na\Lambda^\pm_\phi$ if $\Stab_\h(\A_\na\Lambda^\pm_\phi)$ has infinite index in~$\h$. Note that $\F_\supp(\Lambda^\pm_\psi) = \zeta(\F_\supp(\Lambda^\pm_\phi))$ and so the connected free factor systems $\F_\supp(\Lambda^\pm_\psi)$ and $\F_\supp(\Lambda^\pm_\phi)$ have the same rank. Noting also that $\Lambda^\pm_\psi$ is non-geometric if $\Lambda^\pm_\psi$ is non-geometric, and that $\<\phi, \psi\> \subgroup \h$, hypotheses (a) and (b) of Proposition~\ref{PropSmallerComplexity} are therefore satisfied, hypothesis (b) following from the \emph{Geometricity Alternative}.

It remains to check hypotheses \emph{(i)--(iv)} of Proposition~\ref{PropSmallerComplexity}. Hypothesis~\emph{(i)} requires that $\Lambda^+_\psi$ is weakly attracted to $\Lambda^+_\phi$ under iteration by~$\phi$. Note that $\Lambda^-_\phi \not\subset \Lambda^+_\psi$, for otherwise, since $\F_\supp(\Lambda^-_\phi)$ and $\F_\supp (\Lambda^+_\psi)$ have the same rank, by Lemma~3.1.15 of \BookOne\ it would follow that $\Lambda^-_\phi = \Lambda^+_\psi = \zeta(\Lambda^+_\phi)$, contradicting Lemma~\ref{LemmaConjugatorConstructor}~\pref{ItemPlusNotToMinus}. We conclude that $\Lambda^+_\psi$ does not contain $\gamma^-_\phi$ and hence there exists a weak neighborhood of $\gamma^-_\phi$ which does not contain $\gamma^+_\psi$.
By Lemma~\ref{LemmaConjugatorConstructor}~\pref{ItemNotIntoANA}, the line $\gamma^+_\psi = \zeta(\gamma^+_\phi)$ is not carried by $\A_\na\Lambda^\pm_\phi$. We may therefore apply our weak attraction result, Corollary~\refWA{CorOneWayOrTheOther} (Theorem~H), from which it follows that $\gamma^+_\psi$ is weakly attracted to $\Lambda^+_\phi$ under iteration by~$\phi$. This verifies hypothesis \emph{(i)}. The symmetric arguments show that hypotheses \emph{(ii)}, \emph{(iii)} and \emph{(iv)} are also satisfied. 

We have now verified all of the hypotheses of Proposition~\ref{PropSmallerComplexity}, from the conclusion of which there exists $M \ge 1$ such that for all integers $m,n \ge M$, the outer automorphism $\xi = \psi^m \phi^n = (\zeta \phi \zeta^{-1})^m \phi^n$ has a lamination pair $\Lambda_\xi^\pm$ such that $\Lambda_\xi^\pm$ is non-geometric if $\Lambda_\phi^\pm$ is non-geometric, such that $\F \sqsubset \A_\na\Lambda^\pm_\xi$, such that $\A_\na \Lambda^\pm_\xi \sqsubset \A_\na \Lambda^\pm_\phi$ and $\A_\na \Lambda^\pm_\xi \sqsubset \A_\na \Lambda^\pm_\psi$, and such that $U^-$ contains a generic leaf of~$\Lambda^-_\xi$ and $U^+$ contains a generic leaf of~$\Lambda^+_\xi$. All that remains to verify amongst the conclusions is that if $\Stab_\cH(\A_\na\Lambda^\pm_\phi)$ has infinite index in $\cH$, then $\A_\na(\Lambda^\pm_\xi)$ is properly contained in $\A_\na(\Lambda^\pm_\phi)$. Since $\zeta(\A_\na \Lambda^\pm_\phi) = \A_\na \Lambda^\pm_\psi$ it follows that the maximum length of a strictly decreasing sequence of vertex group systems beginning with $\A_\na\Lambda^\pm_\phi$ is the same as the maximum length of a strictly decreasing sequence of vertex group systems beginning with $\A_\na\Lambda^\pm_\psi$, from which it follows that $\A_\na\Lambda^\pm_\phi$ is not strictly contained in $\A_\na \Lambda^\pm_\psi$, and so it is not contained at all since they are unequal (as seen earlier using that $\Stab_\cH(\A_\na\Lambda^\pm_\phi)$ has infinite index in $\cH$). The containment $\A_\na \Lambda^\pm_\xi \sqsubset \A_\na \Lambda^\pm_\phi$ is therefore proper. This completes the proof of the ``Inductive Step of Proposition~\ref{PropUniversallyAttracting}''.
\end{proof}

\begin{proof}[Proof of Proposition~\ref{PropUniversallyAttracting}.] Given~$\phi$, $\Lambda^\pm_\phi$, and $U$ as in the statement of Proposition~\ref{PropUniversallyAttracting}, we assume the following inductive hypothesis: there is a sequence of rotationless elements $\phi = \phi_0,\phi_1,\ldots,\phi_k \in \cH$ and lamination pairs $\Lambda^\pm_\phi = \Lambda^\pm_0,\Lambda^\pm_1,\ldots,\Lambda^\pm_k$ with $\Lambda^\pm_i \in \L^\pm(\phi_i)$ for $0 \le i \le k$, such that each pair satisfies the geometricity alternative, each $\Lambda^-_i$ has a generic leaf contained in $U$, and there is a sequence of proper containments of vertex group systems
$$\A_\na\Lambda^\pm_0 \sqsupset \A_\na\Lambda^\pm_1 \sqsupset \cdots \sqsupset \A_\na\Lambda^\pm_k
$$
By applying Proposition~\refGM{PropVDCC}, the length $k$ of this sequence has an upper bound. 

Suppose that $\Stab_\h(\A_\na\Lambda^\pm_k)$ has infinite index in $\h$. Since periodic generic leaves are dense in $\Lambda^\pm_k$, after replacing $\phi_k$ by a power we may assume that each of $\Lambda^\pm_k$ has a generic leaf that is fixed by $\phi_k$ with fixed orientation. We may therefore apply the Inductive Step of Proposition~\ref{PropUniversallyAttracting} using $\phi_k$ and $\Lambda^\pm_k$, incrementing the length of the sequence by producing $\phi_{k+1}$ and $\Lambda^\pm_{k+1} \in \L^\pm(\phi_{k+1})$ not carried by~$\F$ and satisfying the geometricity alternative, such that there is proper containment $\A_\na\Lambda^\pm_k \sqsupset \A_\na\Lambda^\pm_{k+1}$, and such that a generic leaf of $\Lambda^-_{k+1}$ is contained in $U^-$. 

Since the length $k$ cannot be increased indefinitely, by induction we may henceforth assume that $\Stab_\h(\A_\na\Lambda^\pm_k)$ has finite index in $\h$. Under this assumption we prove that the conclusions of Proposition~\ref{PropUniversallyAttracting} hold with $\psi=\phi_k$ and $\Lambda^\pm_\psi = \Lambda^\pm_k$. Knowing already that a generic leaf of $\Lambda^-_k$ is contained in $U_-$, we are done by applying the following general statement, which will be useful elsewhere:

\begin{lemma} \label{LemmaInvStab} Suppose that $\Gamma \subgroup \IAThree$ is irreducible relative to a free factor system $\F$, and consider a rotationless $\psi \in \Gamma$ and a lamination pair $\Lambda^\pm_\psi \in \L(\psi)$ that is not carried by~$\F$. If $\Stab_\Gamma(\A_\na\Lambda^\pm_\psi)$ has finite index in $\Gamma$, then:   
\begin{enumerate}
\item \label{ItemInvStabAnaInv}
$\A_\na\Lambda^\pm_\psi$ is $\Gamma$-invariant, that is, $\Gamma = \Stab_\Gamma(\A_\na\Lambda^\pm_\psi)$.
\item \label{ItemInvStabNonG}
If $\Lambda^\pm_\psi$ is nongeometric then $\A_\na\Lambda^\pm_\psi = \F$.
\item \label{ItemInvStabG}
If $\Lambda^\pm_\psi$ is geometric then
\begin{enumerate}
\item\label{ItemInvStIrr}
$\psi$ is fully irreducible rel $\F$.
\item\label{ItemInvStFFS}
$\F_\supp(\Lambda^\pm_\psi)$ is $\Gamma$-invariant, that is, $\Gamma = \Stab_\Gamma(\F_\supp(\Lambda^\pm_\psi))$.
\item\label{ItemInvStMax}
There exists a maximal infinite cyclic subgroup $C \subgroup F_n$ not carried by $\F$ such that
\begin{enumerate}
\item \label{ItemAnaFC}
$\A_\na\Lambda^\pm_\psi = \F \union \{[C]\}$. 
\item \label{ItemFnSuppAna}
$\F_\supp(\F,\Lambda^\pm_{\psi}) = \F_\supp(\F,[C]) = \{[F_n]\}$.
\item \label{ItemAnaHInvariant}
$[C]$ and $\A_\na\Lambda^\pm_{\psi} = \F \union \{[C]\}$ are invariant under the entire group~$\Gamma$.
\end{enumerate}
\end{enumerate} 
\end{enumerate}
\end{lemma}

\begin{proof} There are two cases depending on geometricity of $\Lambda^\pm_\psi$.

\smallskip\textbf{Case A: $\Lambda^\pm_\psi$ is nongeometric,} and so $\F' = \A_\na(\Lambda^\pm_\psi)$ is a free factor system (Proposition~\refWA{PropVerySmallTree}). In the chain of free factor systems $\F \sqsubset \F' \sqsubset \{[F_n]\}$, the second inclusion is proper. If the first inclusion is also proper then, since $\Gamma$ is irreducible rel~$\F$, there exists $\theta \in \Gamma$ that does not fix $\F'$. Since $\theta \in \IA_n(\Z/3)$, it follows by Theorem~\refRK{ThmPeriodicFreeFactor} that no nontrivial power $\theta^k$ fixes $\F'$. This shows that $\<\theta\>$ is an infinite cyclic subgroup of $\Gamma$ having trivial intersection with $\Stab_\Gamma(\F')$, and so $\Stab_\Gamma(\F') = \Stab_\Gamma(\A_\na\Lambda^\pm_\psi)$ has infinite index in~$\Gamma$, contradicting the hypothesis of Lemma~\ref{LemmaInvStab}. Thus the first inclusion $\F \sqsubset \F' = \A_\na\Lambda^\pm_\psi$ is not proper, and so $\A_\na\Lambda^\pm_\psi=\F$ is $\Gamma$-invariant, proving all conclusions in Case~A.

\smallskip\textbf{Case B: $\Lambda^\pm_\psi$ is geometric.} For the proof we choose:

\smallskip
\noindent
$\bullet$ A \ct\ $f \from G \to G$ representing $\psi$, with \eg\ stratum $H_s$ corresponding to $\Lambda^\pm_\psi$, and with core filtration element $G_t$ satisfying $[\pi_1 G_t]=\F$ (Theorem~\refGM{TheoremCTExistence}).

\medskip\noindent
With this choice, we note that the following properties hold:

\medskip
\noindent
$\bullet$ $s>t$ --- because $\Lambda^\pm_\psi$ is not carried by $\F$ \, (Fact~\refGM{FactLamsAndStrata}).

\noindent
$\bullet$ $\F \sqsubset \A_\na(\Lambdapmp)$ --- because $\F \sqsubset [\pi_1 G_{s-1}] \sqsubset \A_\na(\Lambda_k^\pm)$ \, (Definition~\refWA{defn:Z}, Corollary~\trefWA{CorPMna}{ItemAnaDependence}).

\smallskip\noindent
The lamination pair $\Lambda^\pm_\psi$ is geometric, so $\A_\na \Lambda^\pm_\psi$ is a vertex group system but not a free factor system  (Proposition~\refWA{PropVerySmallTree}), and $H_s \subset G$ is a geometric stratum. Choose a geometric model for the \ct\ $f \from G \to G$ and the stratum $H_s$ having associated surface subgroup system $[\pi_1 S]$ and rank~1 subgroup systems $[\bdy_0 S],\ldots,[\bdy_m S]$ associated to the components of $\bdy S$, with the notation chosen so that $\bdy_0 S$ represents the same conjugacy class in $F_n$ as the height~$s$ closed indivisible Nielsen path $\rho_s$ (Definition~\refGM{DefGeomModel}), and so that each of $[\bdy_1 S],\ldots,[\bdy_m S]$ are carried by $[\pi_1 G_{s-1}]$. 

By Proposition~\refGM{PropVertToFree}, the action of $\Stab_\Gamma(\A_\na\Lambda^\pm_\psi)$ on conjugacy classes of elements and subgroups fixes $[\pi_1 S]$ and permutes $[\bdy_0 S], [\bdy_1 S],\ldots,[\bdy_m S]$ amongst themselves. It follows that
$$(*) \qquad\qquad \Stab_\Gamma(\A_\na\Lambda^\pm_\psi) \subgroup \Stab_\Gamma[\pi_1 S] \subgroup \Stab_\Gamma(\F_\supp[\pi_1 S]) = \Stab_\Gamma(\F_\supp(\Lambda^\pm_\psi))
$$
where the last equation follows from Proposition~\trefGM{PropGeomEquiv}{ItemBoundarySupportCarriesLambda}. By Theorem~\refRK{ThmPeriodicConjClass}, periodic conjugacy classes are fixed for each element of $\IA_n(\Z/3)$, and so each of $[\bdy_0 S], [\bdy_1 S],\ldots,[\bdy_m S]$ is fixed by each element of $\Stab_\Gamma(\A_\na\Lambda^\pm_\psi)$. Also by Proposition~\trefGM{PropGeomEquiv}{ItemBoundarySupportCarriesLambda}, we have 
$$\F_\supp\{[\bdy_0 S],[\bdy_1 S],\ldots,[\bdy_m S]\} = \F_\supp\{[\pi_1 S]\} = \F_\supp(\Lambda^\pm_\psi)
$$
Consider the following two free factor systems, both of which are stabilized by $\Stab_\Gamma(\A_\na\Lambda^\pm_\phi)$:
\begin{align*}
\F_1 & = \F_\supp(\F \union \{ [\bdy_1 S],\ldots,[\bdy_m S]\}) \\
\F_2 &= \F_\supp(\F \union \{[\bdy_0 S],[\bdy_1 S],\ldots,[\bdy_m S]\}) \\
        & = \F_\supp(\F \union \{ [\pi_1 S]\}) \\
        &= \F_\supp(\F,\Lambda^\pm_\psi)
\end{align*}
By construction we have three inclusions of free factor systems $\F \sqsubset \F_1 \sqsubset \F_2 \sqsubset \{[F_n]\}$. The middle inclusion is proper because $\F_\supp[\bdy_0 S] \not\sqsubset [\pi_1 G_{s-1}]$ (Lemma~\trefGM{LemmaScaffoldFFS}{ItemTopIsNotLower}) but $\F_1 \sqsubset [\pi_1 G_{s-1}]$. If either the first or third inclusion is proper then, taking $\F'=\F_1$ or $\F'=\F_2$ respectively, and applying the same argument as in Case~A, we conclude that the subgroup $\Stab_\Gamma(\F')$ has infinite index in $\Gamma$. Its subgroup $\Stab_\Gamma(\A_\na\Lambda^\pm_\psi)$ therefore also has infinite index, contradicting the hypothesis of Lemma~\ref{LemmaInvStab}. 

We have verified the two equations
$$ \F_1 = \F = [\pi_1 G_t] \qquad\qquad \F_2 = \{[F_n]\}
$$
from the second of which it follows that $\{[F_n]\} = \F_\supp(\F,\Lambda^\pm_\psi)$ which is~\pref{ItemFnSuppAna}. It also follows that $H_s$ is the top stratum because $\F_\supp(\F,\Lambda^\pm_\psi) \sqsubset [\pi_1 G_s]$. Note also that the core filtration element $G_t$ is the core of the subgraph $G_{s-1}$ and so $\F=[\pi_1 G_t] = [\pi_1 G_{s-1}]$, for if not then we may apply Lemma~\trefGM{LemmaScaffoldFFS}{ItemFFSLowerBdys} with the conclusion that $[\pi_1(G_{s-1})] \not\sqsubset \F_\supp\{[\pi_1 G_t],[\pi_1 S]\} = \{[F_n]\}$, a contradiction. Combining this with Definition~\refWA{defn:Z} and Corollary~\refWA{CorPMna} we have 
$$\A_\na(\Lambda^\pm_\phi) = [\pi_1 G_{s-1}] \union \{[\bdy_0 S]\} = \F \union \{[C]\}
$$
where $C \subgroup F_n \approx \pi_1(G)$ is generated by the path homotopy class of $\rho_s$ relative to its base point. This proves~\pref{ItemAnaFC}. It also proves \pref{ItemInvStIrr}, namely that $\phi$ is fully irreducible rel~$\F$, because by (Reduced) in the definition of a \ct, there is no $\phi_k$-periodic free factor system strictly between $\F = [\pi_1 G_t] = [\pi_1 G_{s-1}]$ and $[\pi_1 G_s] = \{[F_n]\}$.

To verify~\pref{ItemAnaHInvariant}, $\A_\na \Lambda^\pm_\psi$ is already invariant under the subgroup $\Stab_\Gamma(\A_\na \Lambda^\pm_\psi)$, which by assumption has finite index in~$\Gamma$. Consider any $\theta \in \Gamma$. Since $\theta$ has a positive power in the subgroup $\Stab_\Gamma(\A_\na\Lambda^\pm_\psi)$ it follows that $\A_\na\Lambda^\pm_\psi$ is periodic under $\theta$. Since $\A_\na\Lambda^\pm_\psi = \F \union \{[\bdy_0 S]\}$ it follows that each of $\F$ and $[\bdy_0 S]$ is $\theta$-periodic. Since $\theta \in \IA_n(\Z/3)$, we may apply Theorem~\refRK{ThmPeriodicFreeFactor} to $\F$ and Theorem~\refRK{ThmPeriodicConjClass} to $[\bdy_0 S]$ and so each of $\F$ and $[\bdy_0 S]$ is fixed by $\theta$. Since $\theta \in \Gamma$ is arbitrary, $\A_\na\Lambda^\pm_\psi$ is invariant under the whole group~$\Gamma$, proving \pref{ItemInvStabAnaInv} in Case~B.

To complete the proof of Lemma~\ref{LemmaInvStab} it remains to verify~\pref{ItemInvStFFS}, and by combining \pref{ItemInvStabAnaInv} and $(*)$ we have
$$\Gamma = \Stab_\Gamma(\A_\na\Lambda^\pm_\psi) \subgroup \Stab_\Gamma(\F_\supp(\Lambda^\pm_\psi)) \subgroup \Gamma 
$$ 
so each of these inclusions is an equation.
\end{proof}
This completes the proof of Proposition~\ref{PropUniversallyAttracting}.
\end{proof}

\subsection{Driving up $\F_\supp(\F,\Lambda^\pm_\phi)$. The proof of Theorem I.}
\label{SectionLooking}

Let $\h \subgroup \IA_n(\Z/3)$ be finitely generated, and let $\F$ be a proper, $\h$-invariant free factor system so that $\F \sqsubset \{[F_n]\}$ is a multi-edge extension and $\h$ is irreducible rel~$\F$. Proposition~\ref{PropUniversallyAttracting} completes the proof of Theorem~I under the assumption that some element of $\h$ has an attracting lamination not supported by~$\F$ and all such laminations are geometric (nonetheless in Section~\ref{SectionRelGeomIrr} we will put more work into the geometric case to obtain  the additional conclusions of Theorem~J). 

We now formulate Proposition~\ref{PropDrivingUp} which picks up where Proposition~\ref{PropUniversallyAttracting} left off in the case that $\h$ is not geometric above~$\F$. Proposition~\ref{PropUniversallyAttracting}~\pref{ItemProduceNongeometric} gives useful information in this case, allowing us to adopt the extra assumption that the nonattracting subgroup system has achieved its minimum value, namely~$\F$.

Whenever $\theta \in \h$ and $\Lambda^\pm_\theta \in \L^\pm(\theta)$ have been specified so that $\Lambda^\pm_\theta$ is not carried by~$\F$, we define the \emph{absolute} and \emph{relative} free factor supports of $\Lambda^\pm_\theta$ to be
\begin{align*}
\F^\absolute_\theta &= \F_\supp(\Lambda^\pm_\theta) \\
\F^\relative_\theta &= \F_\supp(\F,\Lambda^\pm_\theta) = \F_\supp(\F,\F^\absolute_\theta)
\end{align*}

\begin{proposition}[Driving up $\F^\relative_\phi = \F_\supp(\F,\Lambda^\pm_\phi)$]
\label{PropDrivingUp} 
Consider a subgroup $\h \subgroup \IA_n(\Z/3)$ and a proper, $\h$-invariant free factor system $\F$ such that $\h$ is irreducible rel~$\F$. Consider also a rotationless $\phi \in \h$ and a nongeometric lamination pair $\Lambda^\pm_\phi \in \L^\pm(\phi)$ such that \hbox{$\A_\na(\Lambda_\phi^{\pm}) = \F$}. For any weak neighborhood $U \subset \B$ of a generic leaf of $\Lambda^-_\phi$ there exists $\psi \in \h$ and a nongeometric lamination pair $\Lambda^\pm_\psi \in \L^\pm(\psi)$ such that the following properties hold:
\begin{enumerate}
\item $\A_\na(\Lambda^\pm_{\psi}) = \F$.
\item $\F^\relative_{\psi} = \{[F_n]\}$
\item\label{ItemDrivingUpAbsFFSInvariant}
$\F^\absolute_{\psi}$ is $\h$-invariant
\item\label{ItemDrivingUpInU}
$U$ contains a generic leaf of $\Lambda^-_{\psi}$.
\end{enumerate}
In particular, $\psi$ is fully irreducible relative to~$\F$.
\end{proposition}

Delaying the proof for the moment, we now complete:

\begin{proof}[Proof of Theorem~I] Consider $\h \subgroup \IA_n(\Z/3)$ and a proper, $\h$-invariant free factor system $\F$ so that $\F \sqsubset \{[F_n]\}$ is a multi-edge extension and so that $\h$ is irreducible rel~$\F$. 

If  $\h$ is finitely generated we may apply the Relative Kolchin Theorem~\refRK{relKolchin}, with the conclusion that there exists $\theta \in \h$ and a lamination pair $\Lambda_{\theta}^\pm \in \L^\pm(\theta)$ which is not carried by~$\F$. The same conclusion holds if $\h$ is not finitely generated by the hypothesis of Theorem~I as stated at the beginning of Part~IV. We may choose $\theta$ and $\Lambda^\pm_{\theta}$ so that the \emph{Geometricity Alternative} of Proposition~\ref{PropUniversallyAttracting} holds: either $\Lambda^\pm_{\theta}$ is non-geometric or  $\h$ is  geometric above $\F$. Passing to a power if necessary we may assume that $\theta$ is rotationless. Suppose that $U \subset \B$ is a weak neighborhood of a generic leaf of $\Lambda^-_\theta$.

The hypotheses of Proposition~\ref{PropUniversallyAttracting} have now been verified. From its conclusion there exists $\phi' \in \h$ and a lamination pair $\Lambda^\pm_{\phi'} \in \L^\pm(\phi')$  such that  generic leaves of  $\Lambda^-_{\phi}$ are contained in $U$ and such that one of two cases holds: 

\smallskip\textbf{$\Lambda^\pm_{\phi'}$ is geometric,} and $\phi'$ is fully irreducible relative to~$\F$.

\smallskip\textbf{$\Lambda^\pm_{\phi'}$ is nongeometric,} and $\A_\na(\Lambda^\pm_{\phi'}) = \F$.

\smallskip\noindent
In the latter case, passing to a power we may assume that $\phi'$ is rotationless, and the hypotheses of Proposition~\ref{PropDrivingUp} have been verified. From its conclusion there exists $\phi \in \cH$ which is fully irreducible rel~$\F$ and $\Lambda^-_{\phi} \in \L^-(\phi)$ that is not carried by $\F$ and has generic leaf contained in $U$. In both cases, the proof of Theorem~I is complete.
\end{proof}

The proof of Proposition~\ref{PropDrivingUp}, which is structured similarly to the proof of Proposition~\ref{PropUniversallyAttracting}, is an induction based on the chain condition for free factor systems, and we first organize the machinery of the inductive step into a general statement:

\begin{propositionDrivingUpInd}
Consider a subgroup $\h \subgroup \IAThree$, a proper $\h$-invariant free factor system~$\F$ such that $\h$ is irreducible rel~$\F$, a rotationless $\phi \in \h$, and a nongeometric lamination pair $\Lambda^\pm_\phi \in \L^\pm(\phi)$ such that $\A_\na \Lambda^\pm_\phi = \F$. For any $\zeta \in \h$ such that $\{\zeta(\Lambda^+_\phi),\zeta(\Lambda^-_\phi)\} \intersect \{\Lambda^+_\phi,\Lambda^-_\phi\} = \emptyset$, for any generic leaves $\gamma^\pm_\phi$ of $\Lambda^\pm_\phi$ that are fixed by $\phi$ with fixed orientation, for any weak neighborhood $U^- \subset \B$ of $\gamma^-_\phi$, and for any weak neighborhood $U^+ \subset \B$ of $\zeta(\gamma^+_\phi)$, there exists an integer $M \ge 1$ such that for any integers $m,n \ge M$, the outer automorphism $\xi = (\zeta \phi \zeta^\inv)^m \phi^n$ has a nongeometric lamination pair $\Lambda^\pm_\xi \in \L^\pm(\xi)$ satisfying the following:
\begin{enumerate}
\item $\A_\na\Lambda^\pm_\xi = \F$.
\item $\F^\absolute_\phi \sqsubset \F^\absolute_\xi$, and if $\zeta(\F^\absolute_\phi) \ne \F^\absolute_\phi$ then this inclusion is proper.
\item $U^-$ contains a generic leaf of $\Lambda^-_\xi$ and $U^+$ contains a generic leaf of $\Lambda^+_\xi$.
\end{enumerate}
\end{propositionDrivingUpInd}

Before proving this statement, we first apply it:

\begin{proof}[Proof of Proposition~\ref{PropDrivingUp}, assuming its Inductive Step] Consider $\h$, $\F$, $\phi$, $\Lambda^\pm_\phi$, and $U$ as in the statement of the proposition. We assume the following inductive hypothesis: there is a sequence $\phi = \phi_0,\phi_1,\ldots,\phi_k$ and nongeometric lamination pairs $\Lambda^\pm_\phi = \Lambda^\pm_0,\Lambda^\pm_1,\ldots,\Lambda^\pm_k$ such that for $0 \le i \le k$ we have $\Lambda^\pm_i \in \L^\pm(\phi_i)$, $\A_\na(\Lambda^\pm_i) = \F$, each $\Lambda^-_i$ has a generic leaf contained in $U$, and such that there is a sequence of proper containments of free factor systems systems
$$\F^\absolute_{\phi_0} \sqsubset \F^\absolute_{\phi_1} \sqsubset \cdots \sqsubset \F^\absolute_{\phi_k}
$$
By applying Proposition~\refGM{PropVDCC}, the length $k$ of this sequence has an upper bound. 

Suppose that $\Stab_\h(\F^\absolute_{\phi_k})$ is not $\h$-invariant. Choose $\zeta \in \h$ such that $\zeta(\F^\absolute_{\phi_k}) \ne \F^\absolute_{\phi_k}$. Since $\F^\absolute_{\phi_k} = \F_\supp(\Lambda^\pm_k)$, and since $\zeta(\F^\absolute_{\phi_k}) = \F_\supp(\zeta(\Lambda^\pm_k))$, it follows that $\{\zeta(\Lambda^-_k),\zeta(\Lambda^+_k)\} \intersect \{\Lambda^-_k,\Lambda^+_k\} = \emptyset$. After replacing $\phi_k$ by a power we may assume that each of $\Lambda^\pm_k$ has a generic leaf that is fixed by $\phi_k$ with fixed orientation. We may therefore apply the Inductive Step of Proposition~\ref{PropDrivingUp} using $\phi_k$ and $\Lambda^\pm_k$, incrementing the length of the sequence by producing $\phi_{k+1}$ and a nongeometric lamination pair $\Lambda^\pm_{k+1} \in \L^\pm(\phi_{k+1})$ such that $\A_\na(\Lambda^\pm_{k+1}) = \F$, such that there is proper containment $\F^\absolute_{\phi_k} \sqsubset \F^\absolute_{\phi_{k+1}}$, and such that a generic leaf of $\Lambda^-_{k+1}$ is contained in $U$. 

Since the length $k$ cannot be increased indefinitely, by induction we may henceforth assume that $\F^\absolute_{\phi_k}$ is $\h$-invariant, and using this we prove that $\psi=\phi_k$ and $\Lambda^\pm_\psi = \Lambda^\pm_k$ satisfy the conclusions of Proposition~\ref{PropDrivingUp}. The only conclusion that is not yet proved is that $\F^\relative_{\phi_k} = \{[F_n]\}$. Since $\F$ and $\F^\absolute_{\phi_k}$ are $\h$-invariant, it follows that $\F^\relative_{\phi_k} = \F_\supp(\F,\F^\absolute_{\phi_k})$ is $\h$-invariant. Since $\Lambda^\pm_{\phi_k}$ is not supported by~$\F$, it follows that the inclusion $\F \sqsubset \F^\relative_{\phi_k}$ is proper. Since $\h$ is irreducible rel~$\F$, it follows that $\F^\relative_{\phi_k} = \{[F_n]\}$.
\end{proof}

\begin{proof}[Proof of the Inductive Step of Proposition~\ref{PropDrivingUp}.] Consider $\zeta \in \h$ satisfying the property 
$$\{\zeta(\Lambda^-_\phi),\zeta(\Lambda^+_\phi)\} \intersect \{\Lambda^-_\phi,\Lambda^+_\phi\} = \emptyset
$$
(as noted just above the property $\zeta(\F^\absolute_\phi) \ne \F^\absolute_\phi$ is stronger, but the latter property is not yet assumed and will be brought in only in item \pref{ItemProperExtension} below where it is needed). Consider also $\psi  = \zeta \phi \zeta^{-1}$ and the lamination pair $\Lambda_\psi^\pm = \zeta(\Lambda_\phi^{\pm}) \in \L^\pm(\psi)$ which is nongeometric because $\Lambda^\pm_\phi$ is nongeometric. We have $\A_\na(\Lambda_\psi^{\pm}) = \zeta(\A_\na(\Lambda_\phi^{\pm})) = \zeta(\F) = \F = \A_\na(\Lambda^\pm_\phi)$. Hypotheses (a) and (b) of Proposition~\ref{DefGeometricAboveF} have been verified. Also, knowing that $\{\Lambda^-_\psi,\Lambda^+_\psi)\} \intersect \{\Lambda^-_\phi,\Lambda^+_\phi\} = \emptyset$ and that the free factor systems $\F^\absolute_\phi = \F_\supp(\Lambda^\pm_\phi)$ and $\F^\absolute_\psi = \F_\supp(\Lambda^\pm_\psi)$ consist of single components of equal rank, we may use the exact same argument as in the proof of the Inductive Step of Proposition~\ref{PropUniversallyAttracting} to conclude that there are no inclusions amongst the four laminations $\Lambda^-_\psi,\Lambda^+_\psi$, $\Lambda^-_\phi$, $\Lambda^+_\phi$ and that hypotheses (i)--(iv) of Proposition~\ref{DefGeometricAboveF} hold.

Having verified the hypotheses of Proposition~\ref{PropSmallerComplexity}, we may adopt its conclusions and accompanying notation. Thus there exists an integer $M_0 \ge 1$ such that for all integers $M \ge M_0$ and all integers $m(M),n(M) \ge M$ we obtain the following objects:
\begin{enumerate}
\item\label{ItemXiM}
an outer automorphism of the form $\xi_M = \psi^{m(M)} \phi^{n(M)}$;
\item\label{ItemXiMLams}
lamination pairs $\Lambda^\pm_M = \Lambda^\pm_{\xi_M} \in \L^\pm(\xi_M)$;
\item an attracting neighborhood $V^+_M := V^+_\psi$ of a generic leaf of $\Lambda^+_M$ for the action of $\xi_M$;
\item an attracting neighborhood $V^-_M := V^-_\phi$ of a generic leaf of $\Lambda^-_M$ for the action of $\xi^\inv_M$;
\end{enumerate}
such that the following hold:
\begin{enumeratecontinue}
\item\label{ItemANAisStill_F}
$\A_\na(\Lambda^\pm_M) = \F$ (see Proposition~\ref{PropSmallerComplexity}~(1));
\item \label{ItemInVPM}
$\psi^{m(M)}(V^+_\phi) \subset V^+_\psi = V^+_M$;
\item \label{ItemInVMM}
$\phi^{-n(M)}(V^-_\psi) \subset V^-_\phi = V^-_M$;
\item \label{ItemConvergence}
For any neighborhoods $U^+_\psi,U^-_\phi \subset \B$ of generic leaves of $\Lambda^+_\psi$, $\Lambda^-_\phi$ respectively, if $M$ is sufficiently large then 
generic leaves of $\Lambda^+_M$ are contained in $U^+_\psi$ and generic leaves of $\Lambda^-_M$ are contained in~$U^-_\phi$.
\end{enumeratecontinue}
To complete the induction and thereby complete the proof of Proposition~\ref{PropDrivingUp}, knowing items~\pref{ItemXiM},~\pref{ItemXiMLams} and~\pref{ItemANAisStill_F} it remains to prove:
\begin{enumeratecontinue}
\item\label{ItemProperExtension}
If $M$ is sufficiently large then we have an extension $\F^\absolute_\phi \sqsubset \F^\absolute_{\xi_M}$, and if $\zeta(\F^\absolute_\phi) \ne \F^\absolute_\phi$ then this extension is proper.
\end{enumeratecontinue} 
For this purpose it suffices to show that 
\begin{enumeratecontinue}
\item\label{ItemBothIncluded}
If $M$ is sufficiently large then we have extensions $\F^\absolute_\phi \sqsubset \F^\absolute_{\xi_M}$ and $\F^\absolute_\psi \sqsubset \F^\absolute_{\xi_M}$.  
\end{enumeratecontinue}
To see that this suffices, from \pref{ItemBothIncluded} we deduce~\pref{ItemProperExtension} as follows: assuming that the free factor systems $\F^\absolute_\phi = \F_\supp(\Lambda^\pm_\phi)$ and $\F^\absolute_\psi = \zeta(\F^\absolute_\phi) = \zeta(\F_\supp(\Lambda^\pm_\phi))$ are not equal, and noting that those free factor systems each have a single component and that these components have equal rank, it follows that any free factor system that contains both of them contains them properly.


\smallskip

The verification of item~\pref{ItemBothIncluded} will take up the rest of the proof of Proposition~\ref{PropDrivingUp}.

\smallskip

Consider a sequence $(\gamma^-_M)$ of generic leaves of the laminations~$\Lambda^-_M$,~$M \ge M_0$. Let $\Acc^-$ denote the weak accumulation set of this sequence, meaning the set of weak limits of subsequences; this is a closed subset of $\B$, since $\B$ has a countable basis. Consider similarly a sequence $(\gamma^+_M)$ of generic leaves of the laminations $\Lambda^+_M$, and its weak accumulation set~$\Acc^+$. 

We claim that:
\begin{enumeratecontinue}
\item \label{ClaimAccIntersection}
The closed set $\Acc^- \intersect \Acc^+$ is carried by $\F$. 
\end{enumeratecontinue}
Arguing by contradiction, suppose that there exists $\ell \in \Acc^- \intersect \Acc^+$ which is not carried by $\F$. First we show that the weak closure of $\ell$ contains a generic leaf~$\sigma^-_\phi$ of $\Lambda^-_\phi$. If not then $\ell$ is disjoint from some weak neighborhood of $\sigma^-_\phi$. Applying Corollary~\trefWA{CorOneWayOrTheOther}{ItemUniformOWOTO} (Theorem~H), and using that $\ell$ is not carried by $\F = \A_\na(\Lambda^\pm_\phi)$, there exists $N>0$ such that $\phi^N(\ell) \in V^+_\phi$. Since $\ell \in \Acc^-$ it follows that there exists $M \ge M_0$ such that $n(M) \ge M \ge N$ and such that $\phi^N(\gamma^-_{M})\in V^+_\phi$, and so $\phi^{n(M)}(\gamma^-_M) \in V^+_\phi$. By item~\pref{ItemInVPM} we have $\xi^{\vphantom-}_{M}(\gamma_{M}^{-}) = \psi^{m(M)}\phi^{n(M)} (\gamma_{M}^-) \in V^+_{M}$. This contradicts the fact that $\xi^{\vphantom-}_{M}(\gamma_{M}^{-})$, which is a generic leaf of $\Lambda^-_{M}$, is not contained in any attracting neighborhood for $\Lambda^+_{M}$. Having shown that the weak closure of $\ell$ contains $\sigma^-_\phi$, it follows that $\sigma^-_\phi \in \Acc^- \intersect \Acc^+$, and we also know that $\sigma^-_\phi$ is not carried by $\F = \A_\na(\Lambda^\pm_\phi)$. By a completely symmetric argument, using $\sigma^-_\phi$ instead of~$\ell$, using $\xi_M^{-1}= \phi^{-n(M)}\psi^{-m(M)}$ instead of $\xi_M$, and using \pref{ItemInVMM} instead of \pref{ItemInVPM}, it follows that the weak closure of $\sigma^-_\phi$ contains~$\sigma^+_\psi$, and so $ \Lambda^+_\psi \subset \Lambda^-_\phi$, a contradiction that proves Claim~\pref{ClaimAccIntersection}.

\smallskip

Fix a marked graph $H$ having a core subgraph $H_0$ that realizes the free factor system~$\F$. For each $M \ge M_0$, let $A_M \subgroup F_n$ denote a free factor such that $\F^\absolute_{\xi_M} = \{[A_M]\}$. Let $K_M$ be the Stallings graph determined by $H$ and $A_M$, i.e.\ the core of the covering space of $H$ corresponding to the subgroup $A_M$. The restriction $p_M \from K_M \to H$ of the covering map is an immersion such that $[p_*(\pi_1(K_M))] = [A_M]$. 

Since $A_M$ is a free factor  of $F_n$, there exists an embedding of $K_M$ into a marked graph $G_M$ and an extension of $p_M$ to a homotopy equivalence $q_M \from G_M \to H$ that preserves marking. For every line $\ell \in \B$, letting $\ell_M$, $\ell_H$ be its realizations in $G_M$, $H$ respectively, note the following equivalence:
\begin{itemize}
\item[] $\ell$ is supported by $\F_\supp(\Lambda^\pm_{M}) = \F^\absolute_{\xi_M} = \{[A_M]\}$ $\iff$ $\ell_M$ is contained in $K_M$
\end{itemize}
Furthermore, if these equivalent statements hold then: the restriction of $p_M$ to $\ell_M$ is an immersion whose image is $\ell_H$; and $\ell_H$ lifts uniquely via $p_M$ to a line in $K_M$, that line being~$\ell_M$. By construction these statements hold whenever $\ell$ is a leaf of $\Lambda^+_M$ or $\Lambda^-_M$, in particular when $\ell = \gamma^\pm_M$.

Consider the natural cell structure on the graph $K_M$ in which each vertex has \hbox{valence~$\ge 3$}. Consider also the subdivided cell structure with respect to which $p_M \from K_M \to H$ is a cellular map taking each vertex to a vertex and each edge to an edge; the edges with respect to this subdivision are called \emph{edgelets} of $K_M$, and we label each edgelet by its image in $H$. For each $M \ge M_0$ we shall identify the leaves $\gamma^\pm_M$ with their realizations in $G_M$ (obtained by lifting via $p_M$) and so each of $\gamma^\pm_M$ is contained in the subgraph $K_M$ and is immersed by $p_M$ with image being the realizations in $H$. Since $A_M$ is the free factor support  of both $\Lambda^+_{\xi_M}$ and $\Lambda^-_{\xi_M}$,   it follows that 
\begin{enumeratecontinue}
\item\label{ItemCrossingWHKM}
Each natural edge of $K_M$ is crossed by both $\gamma^+_M$ and $\gamma^-_M$. 
\end{enumeratecontinue}

For each integer $C \ge 0$ let $Y_{M,C} \subset K_M$ be the $C$-neighborhood of the set of natural vertices of $K_M$, with respect to a path metric in which every edgelet of $K_M$ has length~$1$. We claim next that:
\begin{enumeratecontinue}
\item\label{ItemYMBound}
There exists a constant $C$ independent of $M \ge M_0$ such that each edgelet of $K_M$ that is labelled by an edge in $H \setminus H_0$ is contained in $Y_{M,C}$, equivalently each edgelet of $K_M \setminus Y_{M,C}$ is labelled by an edge in $H_0$. 
\end{enumeratecontinue}
To prove this, if no such $C$ exists then there is a subsequence $(M_i)_{i \ge 1}$ diverging to $+\infinity$, and for each $i \ge 1$ there is an edgelet $e_i \subset K_{M_i}$ projecting to an edge of $H \setminus H_0$, such that $e_i$ is the central edgelet of an embedded edgelet path $\eta_i \subset K_{M_i}$ of length $2i+1$ and $\eta_i$ contains no natural vertex of $K_{M_i}$. For some subsequence of $(M_i)$, the projection to $H$ of the edgelet $e_i$ is constant independent of~$i$; for some further subsequence of $(M_i)$, the projection to $H$ of the central length~$3$ subpath of $\eta_i$ is constant, independent of~$i$; for some further subsequence the projection of the central length~$5$ subpath is constant; and so on. By continuing inductively and then diagonalizing, we obtain a subsequence of $(M_i)$ such that for each $i$ the projection to $H$ of the central $2i+1$ subsequence of $\eta_j$ is constant independent of $j \ge i$. It follows by \pref{ItemCrossingWHKM} that $\gamma^+_{M_i}$ and $\gamma^-_{M_i}$ each cross $\eta_i$. The nested union of the projections to $H$ of the paths $\eta_i$ is therefore a line $\ell \in \Acc^+ \intersect \Acc^-$ realized in $H$. But $\ell$ crosses an edge of $H \setminus H_0$, namely the projection of $e_i$, and so $\ell$ is not carried by $[H_0]=\F$, contradicting~\pref{ClaimAccIntersection} and therefore proving~\pref{ItemYMBound}.

As a consequence of the fact that $K_M$ has a uniformly bounded number of natural vertices, it follows that the graph $Y_M=Y_{M,C}$ has a uniformly bounded number of edgelets. Note also that the set of edgelet labels---namely, the edges of $H$---is finite, and that $K_M$ has uniformly bounded rank. We may therefore assume, after passing to a further subsequence, that for all $M,M' \ge M_0$ there is a homeomorphism $h_{M,M'} : (K_M,Y_M)  \to (K_{M'},Y_{M'})$ whose restriction to $Y_M$ maps edgelet to edgelet and preserves labels. In other words, as an unlabelled natural graph $K_M$ is independent of $M$, and furthermore its labelled edgelet subgraph $Y_M$ is independent of $M$. The components of $K_M \setminus Y_M$ are central subpaths of natural edges, and all the edgelet labels along these subpaths are in~$H_0$. After passing to another subsequence and perhaps enlarging $Y_M$ we may assume that the edgelet length of each component of $K_M \setminus Y_M$ goes to infinity with $M$.
 
To complete the proof, letting $\sigma^+_\psi \in \Lambda^+_\psi$, $\sigma^-_\phi \in \Lambda^-_\phi$ be generic leaves, it suffices to show that if $M$ is sufficiently large then the realizations in $H$ of both $\sigma^+_\psi$ and $\sigma^-_\phi$ lift into $Y_M$, for that implies that both $\Lambda^+_\psi$ and $\Lambda^-_\phi$ are carried by $[A_M]$, and so $\F^\absolute_\phi, \F^\absolute_\psi \sqsubset \{[A_M]\} = \F^\absolute_{\xi_M}$ as desired for item~\pref{ItemBothIncluded}. Suppose that $\rho$ is a finite subpath of the realization of $\sigma^+_\psi$ in $H$ such that $\rho$ begins and ends with edges in $H\setminus H_0$. Letting $U^+_\psi = \B(H,\rho)$, it follows by \pref{ItemConvergence} that there exists $M_\rho$ such that $\rho$ is a subpath of the realization of $\gamma^+_M$ in $H$, for all $M \ge M_\rho$. Lifting $\rho$ along with $\gamma^+_M$ into $K_M$ we obtain a lift $\rho' \subset K_M$ with first and last edgelets in $Y_M$ and edgelet length $L$ that is independent of $M$.  For sufficiently large $M$, the edgelet length of each component of $K_M\setminus Y_M$ is greater than $L$ and so $\rho' \subset Y_M$. Since $Y_M$ is independent of $M$, each such path $\rho$, and hence the whole line $\sigma^+_\psi$, lifts into $Y_M$ as desired. The symmetric argument for $\sigma^-_\phi$ completes the proof of the Inductive Step of Proposition~\ref{PropDrivingUp}.
\end{proof}

\subsection{Theorem~J: Relatively geometric irreducible subgroups}
\label{SectionRelGeomIrr}
In the case of Proposition~\ref{PropUniversallyAttracting} where $\h$ is geometric above~$\F$, stronger conclusions follow as explained in Theorem~J, the absolute case of which was stated in the introduction as Theorem~I. Here we state and prove Theorem~J in its full generality, which will be pretty quick after we review from \PartOne\ concepts of geometric models needed for the general statement of the theorem.

First we review the definition of a weak geometric model (Definition~\refGM{DefWeakGeomModel}), which applies to any top \eg\ stratum of any \ct. Consider $\phi \in \Out(F_n)$ represented by a \ct\ $f \from G \to G$ with top~\eg\ stratum~$H_r$, and let $\Lambda \in \L(\phi)$ be the attracting lamination that corresponds to~$H_r$. Recall from Definition~\refGM{DefGeometricLamination}, Proposition~\refGM{PropGeomEquiv}, and Definition~\refGM{DefGeometricStratum} that $\Lambda$ is geometric if and only if $H_r$ is geometric if and only if there exists a weak geometric model of the \ct\ $f$ for the stratum $H_r$, the definition of which is as follows. The static data of a weak geometric model consists of a 2-complex $Y$ formed as the quotient of a compact surface $S$ and the graph $G_{r-1}$, where $S$ has one ``upper'' boundary component $\bdy_0 S$ and remaining ``lower'' boundary components $\bdy_i S$, $i=1,\ldots,m$ ($m \ge 0$), and where the quotient is formed by gluing each lower boundary circle $\bdy_i S$ to $G_{r-1}$ using a homotopically nontrivial closed edge path $\alpha_i \from \bdy_i S \to G_{r-1}$. The static data also includes an embedding $G \inject Y$ extending the embedding of $G_{r-1}$, and a deformation retraction $d \from Y \to G$, such that $G \intersect \bdy_0 S = \{p_r\}$ is a single point, and such that $d \restrict \bdy_0 S$ is an immersion. The dynamic data of a weak geometric model consists of a homotopy equivalence $h \from Y \to Y$ and a homeomorphism $\Psi \from (S,\bdy_0 S) \to (S,\bdy_0 S)$ with pseudo-Anosov mapping class $\psi \in \MCG(S)$ such that the maps $(f \restrict G_r) \composed d, \,\, d \composed h \from Y \to G_r$ are homotopic, and the maps $j \composed \Psi, \,\, h \composed j \from S \to Y$ are homotopic. 

Under the quotient map $j \from S \disjunion G_{r-1} \to Y$, the subset $j\left(G_{r-1} \disjunion \bdy_0 S\right) = Y - j(\interior S)$ is called the \emph{complementary subgraph} $K \subset Y$ (Definition~\refGM{DefComplSubgraph}). Since $G_{r-1}$ has no contractible components (by Fact~\refGM{FactGeometricCharacterization}) it follows that $K$ has no contractible components; combining with Lemma~\refGM{LemmaLImmersed}, the restricted map $d \restrict K \from K \to G$ is $\pi_1$-injective on each component of $K$ and there is a subgroup system $[\pi_1 K] $ having one component of the form $[d_* \pi_1(K_l)]$ for each component $K_l$ of $K$. Letting $[\bdy_0 S]$ denote the component of $[\pi_1 K]$ corresponding to the component $\bdy_0 S$ of $K$, and since $G_{r-1}$---the union of the remaining components---represents $\F$, we have the first equation in the following:
$$[\pi_1 K] = \F \union \{[\bdy_0 S]\} = \A_\na(\Lambda^\pm_\phi) \qquad\qquad (*)
$$
For the second equation see Definition~\refWA{defn:Z}; also, this subgroup system is a vertex group system but not a free factor system (Proposition~\refWA{PropVerySmallTree}).

The restricted map $j \restrict S \from S \to Y$ and its composition with $d \from Y \to G$ are $\pi_1$-injective. Picking base points, we get an induced map $dj_* \from \pi_1 S \to F_n$, a $dj_*$-equivariant continuous maps $dj_\bdy \from \bdy \pi_1 S \to \bdy F_n$, and a continuous map $dj_\B \from \B(\pi_1 S) \to \B(F_n)$. The image subgroup $dj_*(\pi_1 S)$ is its own normalizer (Lemma~\refGM{LemmaLImmersed}), so there is a well-defined restriction homomorphism from the subgroup $\Stab[\pi_1 S] \subgroup \Out(F_n)$ to $\Out(\pi_1 S)$ (Fact~\refGM{FactMalnormalRestriction}); we shall denote this homomorphism $dj \big| \from \Stab[\pi_1 S] \to \Out(\pi_1 S)$. It is elementary to check that the induced map of lines $dj_\B$ is equivariant with respect to the restriction homomorphism $dj \restrict$, meaning that for each $\phi \in \Stab[\pi_1 S]$ and each $\ell \in \B(\pi_1 S)$ we have $\phi(dj_\B(\ell)) = dj_\B((dj \restrict \phi)(\ell))$.

The Dehn-Nielsen-Baer Theorem \cite{FarbMargalit:primer} identifies $\MCG(S)$ with the subgroup of $\Out(\pi_1 S)$ preserving the set of conjugacy classes in $\pi_1 S$ associated to oriented components of $\pi_1 S$ (see Proposition~\refGM{PropVertToFree} and the preceding paragraph). Let $\L(\Out(\pi_1 S))$ denote the set of all attracting laminations of all elements of $\Out(\pi_1 S)$, and let $\L(\MCG(S))$ denote the set of all unstable laminations of all elements of $\MCG(S)$. Each element of $\L(\Out(\pi_1 S))$ or of $\L(\MCG(S))$ is regarded as a closed subset of $\B(\pi_1 S)$ (see Section~\refGM{SectionNTReview}). As subsets of $\B(\pi_1 S)$ we have an inclusion $\L(\MCG(S)) \subset \L(\Out(\pi_1 S))$, which can be seen as follows. For the case of an unstable lamination of a pseudo-Anosov mapping class this is proved in Proposition~\refGM{PropGeomLams}. More generally, given $\phi \in \MCG(S)$ and a connected, $\phi$-invariant subsurface $S' \subset S$ on which $\phi$ restricts to a pseudo-Anosov mapping class with unstable lamination $\Lambda \in \L(\MCG(S'))$, by applying the same Proposition~\refGM{PropGeomLams} we may regard $\Lambda$ as an element of $\L(\Out(\pi_1 S'))$, and they by applying Lemma~\refGM{LemmaPushLam} using the inclusion $\pi_1 S' \hookrightarrow \pi_1 S$ we obtain an element of $\L(\Out(\pi_1 S))$. 

\begin{TheoremJ}[Relative, general version] Given a finitely generated subgroup $\h \subgroup \IA_n(\Z/3)$ and an $\h$-invariant free factor system $\F$, if $\F \sqsubset \{[F_n]\}$ is a multi-edge extension, and if $H$ is geometric irreducible rel~$\F$, then there exists $\phi \in \h$ and $\Lambda \in \L(\phi)$ such that $\phi$ is irreducible rel~$\F$ and $\F_\supp(\F,\Lambda)=\{[F_n]\}$. Furthermore, for any such $\phi$ and $\Lambda$, for any \ct\ $f \from G \to G$ with top stratum $H_r$ corresponding to $\Lambda$, and for any geometric model of $f$ and $H_r$ as notated above, the following hold:
\begin{enumerate}
\item\label{ItemThmJStab}
$\h$ stabilizes the subgroup system $[\pi_1 S]$ and the free factor system $\F_\supp[\pi_1 S] = \F_\supp(\Lambda)$, that is, $\h \subgroup \Stab[\pi_1 S]$ and $\h \subgroup \Stab[\pi_1 S] = \Stab(\F_\supp(\Lambda))$.
\item\label{ItemThmJImage}
The image group $dj \restrict \h \subgroup \Out(\pi_1 S)$ is contained in $\MCG(S)$, and by restriction of range we get a homomorphism denoted $dj^\# \from \h \to \MCG(S)$, 
\item\label{ItemThmJPsAn}
$dj^\#(\phi)$ is a pseudo-Anosov mapping class on $S$. More generally, for all $\psi \in \h$, $dj^\#(\psi)$ is pseudo-Anosov if and only if $\psi$ is fully irreducible rel~$\F$.
\item\label{ItemThmJLams}
The map $dj_\B \from \B(\pi_1 S) \to \B(F_n)=\B$ induces bijections amongst the following sets: 
\begin{itemize}
\item the set $\L(\h;\MCG(S))$ consisting of all elements of $\L(\MCG(S))$ which are  unstable laminations of elements of $dj^\#(\h)$ regarded as a subgroup of $\MCG(S)$; 
\item the set $\L(\h;\Out(\pi_1 S))$ consisting of all elements of $\L(\Out(\pi_1 S))$ which are attracting laminations of elements of $dj^\#(\h)$ regarded as a subgroup of $\Out(\pi_1 S))$;
\item the subset $\L(\h; \,\not\sqsubset\!\!\F)$ of $\L(\Out(F_n)) = \union_{\phi \in \Out(F_n)} \L(\phi)$ consisting of those elements of $\L(\h) = \union_{\phi \in \h} \L(\phi)$ that are not supported by $\F$.
\end{itemize}
\item Combining \pref{ItemThmJPsAn} and~\pref{ItemThmJLams}, the map $dj_\B$ also induces a bijection between the set of all unstable laminations of pseudo-Anosov elements of $dj^\#(\h)$ and all attracting laminations not supported by $\F$ of elements of $\h$ that are fully irreducible rel~$\F$.
\end{enumerate}
\end{TheoremJ}

\textbf{Remark.} In order to match the conclusions of the the general, relative case of Theorem~I with the conclusion of the absolute case that was stated in the Introduction \Intro, a few words are needed. In the absolute case the free factor system~$\F$ is trivial, $G_{r-1} = \emptyset$, $H_r = G$, and $Y=S$ has one boundary component~$\bdy_0 S$. In this case we have isomorphisms $\pi_1(S) = \pi_1(Y) \xrightarrow{d_*} \pi_1(G) \approx F_n$ well-defined up to inner automorphism; we have a well-defined induced isomorphism $\Out(\pi_1 S) \approx \Out(F_n)$; the induced homomorphism $\Stab[\pi_1 S] \mapsto \Out(F_n)$ is just the identity map; and the group $\MCG(S)$ may be regarded as a subgroup of $\Out(F_n)$. From the conclusions of general, relative version of Theorem~J it follows that $\h$ is contained in the $\MCG(S)$ subgroup of $\Out(\pi_1 S)$, which is exactly the conclusion of the absolute case.

\begin{proof} The proof starts off just as does the proof of Theorem~I at the beginning of Section~\ref{SectionLooking}, namely, apply the Relative Kolchin Theorem to obtain $\phi' \in \h$ and $\Lambda^\pm_{\phi'} \in \L^\pm(\phi')$ which is not carried by~$\F$, and then apply Proposition~\ref{PropUniversallyAttracting}~\pref{ItemProduceIrreducible} to obtain $\phi \in \h$ and $\Lambda^\pm_\phi \in \L^\pm(\phi)$ such that $\phi$ is irreducible rel~$\F$, $\F_\supp(\F,\Lambda^\pm_\phi) = \{[F_n]\}$, and the subgroup system $\A_\na \Lambda^\pm_\phi$ is stabilized by~$\h$. Fix such a $\phi$ and $\Lambda^\pm_\phi$. Choose a \ct\ $f \from G \to G$ representing $\phi$ with core filtration element $G_s$ such that $\F = [G_s]$ and with \eg-stratum $H_r$ corresponding to $\Lambda^+_\phi$. It follows that $H_r$ is the top stratum, that $H_r$ is a geometric stratum, and from (Filtration) in the definition of a \ct\ that $\F = [G_{r-1}]$, and so $G_s$ is the core of $G_{r-1}$. 

Choose a geometric model for $f$ and $H_r$ denoted as above, and so we have an equation of subgroup systems
$$\A_\na(\Lambda^\pm_\phi) = [\pi_1 K] = [G_{r-1}] \union [\bdy_0 S]
$$
It follows that $\h \subgroup \Stab[\pi_1 K]$. By Proposition~\refGM{PropVertToFree} it also follows that $\Stab[\pi_1 K] \subgroup \Stab[\pi_1 S]$ and that the image of the induced homomorphism $\Stab[\pi_1 K] \mapsto \Out(\pi_1 S)$ is contained in the $\MCG(S)$ subgroup of $\Out(\pi_1 S)$; and so the image of the composed homomorphism $\h \to \Out(\pi_1 S)$ is contained in $\MCG(S)$. This proves~\pref{ItemThmJStab}. Since $\Stab(\F_\supp[\pi_1 S]) \subgroup \Stab[\pi_1 S]$, and since $\F_\supp[\pi_1 S] = \F_\supp(\Lambda)$ (by Proposition~\trefGM{PropGeomEquiv}{ItemBoundarySupportCarriesLambda}), it also proves~\pref{ItemThmJImage}. 

It follows from the definition of a weak geometric model that the image of $\phi$ under the homomorphism $dj^\# \from \h \to \MCG(S)$ is pseudo-Anosov, proving~the first sentence of \pref{ItemThmJPsAn}; we delay the proof of the second sentence.

We next prove \pref{ItemThmJLams}. From the discussion preceding the statement of Theorem~J we obtain the injection $\L(\h;\MCG(S)) \inject \L(\h;\Out(\pi_1 S))$. By applying Lemma~\refGM{LemmaPushLam} we obtain the function $\L(\h;\Out(\pi_1 S)) \to \L(\h)$, and this map is also an injection because attracting laminations of elements of $\L(\h;\MCG(S))$ are disoint form $\B(\bdy S)$ but, by combining Lemma~\trefGM{LemmaLImmersed}{ItemSeparationOfSAndL} with Fact~\refGM{FactBoundaries}, the map $dj_\B \from \B(\pi_1 S) \to \B$ is one--to--one when restricted to $\B(\pi_1 S) - \B(\bdy S)$. 

It remains to prove~\pref{ItemThmJPsAn}. Given $\psi \in \h$, the mapping class $dj^\#(\psi)$ is either reducible or pseudo-Anosov, and we consider these cases separately.

\textbf{Case 1: $dj^\#(\psi)$ is reducible.} We must prove $\psi$ is not fully irreducible rel~$\F$. After passing to a power of $\psi$ we may assume that each component of the Thurston decomposition of $dj^\#(\psi)$ is fixed by $dj^\#(\psi)$. Since $\bdy_0 S$ is invariant by $dj^\#(\psi)$, using the Thurston decomposition of $dj^\#(\psi)$ it follows that there is a connected essential subsurface $F \subset S$ whose isotopy class is invariant by $dj^\#(\psi)$ such that $F \intersect \bdy S$ is a union of components of $\bdy S$, including $\bdy_0 S$ but not including the entirety of $\bdy S$, and such that no component of the (possibly disconnected) essential subsurface $S \setminus F$ is a boundary parallel annulus. The subsurface $S \setminus F$ is also invariant up to homotopy by $dj^\#(\psi)$, and so $G_{r-1} \union (S \setminus F)$ is invariant up to homotopy by $h$. The subgroup system $[G_{r-1} \union (S \setminus F)]$, consisting of the conjugacy classes of the subgroups of $\pi_1(Y) \approx F_n$ which are the $\pi_1$-images of the components of $G_{r-1} \union (S \setminus F)$, is therefore invariant under $\psi$. By elementary surface topology, in the surface $S$ there is a nested triple of finite subgraphs $(\bdy S - \bdy_0 S) \subset A \subset B$ and a deformation retraction $S \mapsto B$ which restricts to deformation retraction $S \setminus F \mapsto A$ such that $A$ is proper in $B$. It follows that there is a deformation retraction $Y = G_{r-1} \union S \mapsto G_{r-1} \union B$ that restricts to a deformation restriction $G_{r-1} \union (S \setminus F) \mapsto G_{r-1} \union A$. Regarding $G_{r-1} \union B$ as a marked graph with respect to the isomorphism $\pi_1(G_{r-1} \union B) \approx \pi_1(Y) \approx F_n$, it follows that the $\psi$-invariant subgroup system $[G_{r-1} \union (S \setminus F)]$ coincides with the free factor system $[G_{r-1} \union A]$ and therefore $\psi$ is not fully irreducible rel~$\F$.

\textbf{Case 2: $dj^\#(\psi)$ is pseudo-Anosov.} Choose a pseudo-Anosov homeomorphism $\Theta_S \from S \to S$ that represents the mapping class $dj^\#(\psi)$. Since $\psi \in \cH$, there exists a topological representative $f_\psi \from (G,G_{r-1}) \to (G,G_{r-1})$. It follows that the maps $f_\psi \circ (d \circ j)$, $(d \circ j) \circ \Theta_S \from S \to G$ are homotopic. Applying Proposition~\refWA{PropWeakGeomRelFullIrr}, we conclude that $\psi$ is fully irreducible rel~$\F$.
\end{proof}

\textbf{Remarks.} With some further work, one will be able to deduce further conclusions in the context of Theorem~J~\pref{ItemThmJPsAn} beyond the case that $dj^\#(\psi)$ is pseudo-Anosov, by relating properties of the Thurston decomposition of $dj^\#(\psi)$ with properties of~$\psi$. For example, by applying Lemma~\refGM{LemmaPushLam} it will follow that $\psi$ is of polynomial growth relative to $\F$ if and only if the Thurston decomposition of $dj^\#(\psi)$ has no pseudo-Anosov components. Also, the subset of lamination pairs in $\L^\pm(\psi)$ that are not supported by $\F$ will be in natural bijective correspondence with the unstable-stable lamination pairs of the Thurston decomposition of~$dj^\#(\psi)$. We do not pursue these issues any further here.

\section{A filling lemma}

In \cite{HandelMosher:SubgroupOutF_n}, the predecessor of this series of papers, we proved Theorem~A which is the special case of Theorem~I under the additional hypothesis that $\F = \emptyset$ (see \Intro). That proof follows the same structure of two ping-pong tournaments followed above in the proof of Theorem~I. In the absolute case, the second tournament has the goal of driving up the ``absolute'' free factor support $\F_\supp(\Lambda^\pm_\phi)$ to its maximal value of $\{[F_n]\}$. The logic of that proof used a more complicated argument for driving up free factor supports, which is encapsulated in Proposition~8.1 of \cite{HandelMosher:SubgroupOutF_n}. Although in proving Theorem~I we have avoided these complications and produced an argument simpler than that in Theorem~A, we nonetheless find that a relativization of \cite{HandelMosher:SubgroupOutF_n} Proposition~8.1 is useful in other contexts \cite{HandelMosher:FreeSplittingLox}, and so we develop that relativization here.

\begin{proposition} \label{two filling sets} Suppose that $B_1, B_2 \subset \B$ are sets of lines, that $\F_0$ is a free factor system that carries  $\cl(B_1) \cap \cl(B_2)$. Suppose also that the following properties hold:
\begin{enumerate}
\item Neither end of any element of $B_1$ or $B_2$ is carried by $\F_0$.  
\item The free factor support of each of $B_1$ and $B_2$ is $\{[F_n]\}$.
\end{enumerate}
Then there exist weak neighborhoods $U(b) \subset \B$, one for each $b \in B_1 \cup B_2$, such that for each proper free factor system $\F $ there exists $b_\F \in B_1 \cup B_2$ such that $\F$ does not carry a line in~$U(b_\F)$. 
\end{proposition}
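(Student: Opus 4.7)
We argue by contradiction. Suppose no such assignment of neighborhoods exists. Fix a marked graph $G$ and, for each $b \in B_1 \cup B_2$, a nested exhaustion $\alpha_1(b) \subset \alpha_2(b) \subset \cdots$ of the realization of $b$ in $G$ by finite subpaths, giving a weak neighborhood basis $V_k(b) := V(G, \alpha_k(b))$ at $b$. Applying the negated conclusion with $U(b) = V_k(b)$, for each $k \ge 1$ we obtain a proper free factor system $\F_k$ and, for every $b \in B_1 \cup B_2$, a line $\ell_{b,k} \in \mathcal{L}(\F_k) \cap V_k(b)$; in particular $\ell_{b,k} \to b$ weakly as $k \to \infty$.

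The plan is to extract a subsequence of $(\F_k)$ along which the $\F_k$ ``converge'' in a controlled sense to a single proper free factor system $\F_\infty$, forcing $b \in \mathcal{L}(\F_\infty)$ for every $b \in B_1 \cup B_2$ (by closedness of $\mathcal{L}(\F_\infty)$) and so contradicting condition~(2). This proposition is modelled on the ``driving up'' argument of Section~\ref{SectionLooking}: the roles of generic leaves of forward/backward laminations of a ping-pong element are played by $B_1$ and $B_2$, and the role of the ambient $\H$-invariant free factor system is played by $\F_0$. Concretely, I would realize each $\F_k$ by a Stallings subgraph $K_k$ of a suitable marked graph, so that each $\ell_{b,k}$ lifts through the immersion $K_k \to G$ and realizes $\alpha_k(b)$ as a subpath of the lift. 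Since $\F_k$ is proper, the total rank of $K_k$ is at most $n-1$, and hence the number of natural (valence~$\neq 2$) vertices of $K_k$ is uniformly bounded. Passing to a subsequence one arranges that the labelled edgelet type of $K_k$ in a bounded combinatorial neighborhood of its natural vertices is constant, and each natural edge of $K_k$ either stabilizes in edgelet length or lengthens without bound; this is exactly the Stallings-graph compactness step used in the proof of~$(*)$ in Section~\ref{SectionLooking}.

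The main obstacle is to ensure that the limit system $\F_\infty = [\pi_1 K_\infty]$ is a \emph{proper} free factor system rather than degenerating to $\{[F_n]\}$: an unboundedly lengthening natural edge could in principle inflate the total rank to $n$. This is precisely where condition~(1) and the hypothesis $\cl(B_1) \cap \cl(B_2) \sqsubset \F_0$ enter, in tandem. A lengthening natural edge traversed cofinally by lifts of lines approximating both $B_1$ and $B_2$ produces, in the limit, a line lying in $\cl(B_1) \cap \cl(B_2)$, hence carried by $\F_0$ by hypothesis. Condition~(1) then forbids such a line from extending the ends of any $b \in B_1 \cup B_2$, so the lengthening edges can only contribute ``interior'' segments to the lifts of the $b$'s rather than new rank to $K_\infty$. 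This $\F_0$-vs-condition~(1) bookkeeping is directly analogous to, though more delicate than, the argument around claim~\pref{ClaimAccIntersection} in Section~\ref{SectionLooking}, and it is the main technical step of the proof; everything else (the diagonal passage to a combinatorially stable subsequence, the identification of $K_\infty$ as a Stallings graph, and the concluding appeal to hypothesis~(2)) follows a well-established template.
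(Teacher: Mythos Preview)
Your overall architecture matches the paper's: argue by contradiction, realize each $\F_k$ by a Stallings-type graph, stabilize a bounded part and let certain natural edges lengthen, then use $\cl(B_1)\cap\cl(B_2)\sqsubset\F_0$ to control the long edges. But there is a real gap at the heart of your sketch. You assert that a lengthening natural edge of $K_k$ is ``traversed cofinally by lifts of lines approximating both $B_1$ and $B_2$'', and it is precisely this joint traversal that lets you invoke $\cl(B_1)\cap\cl(B_2)$. Nothing in your setup forces it: the lifts of the $\ell_{b,k}$ for $b\in B_1$ might occupy one part of $K_k$ while those for $b\in B_2$ occupy another, with long edges seen by only one side. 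The paper handles this by an extra folding step you omit: starting from a representative $(G^i,S^i,\rho^i)$ of $\F^i$, it factors $\rho^i$ as a sequence of folds and chooses the \emph{minimal} intermediate stage $G^i_{M_i}$ at which every natural edge is covered by images of the chosen subpaths for \emph{both} $l=1$ and $l=2$. If the total length at that stage does not diverge, one looks one fold earlier, finds an edge missed by one of $B_1,B_2$, and contradicts~(2) directly. If it does diverge, then by construction every long natural edge is a subpath of both an element of $B_1$ and of $B_2$, so its label lies in $\cl(B_1)\cap\cl(B_2)$ and hence in $H_0$; that is what pushes the long edges into $H_0$, not any limiting ``$\F_\infty$''.

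Your use of condition~(1) is also off. In the paper, (1) is not used to prevent long $H_0$-edges from ``extending the ends of $b$''; rather, (1) is used at the very start to choose the neighborhoods: $U(b)=V(H,b_C)$ where $b_C$ is a subpath of $b$ that \emph{begins and ends with edges outside $H_0$} and has $2C$ such edges. With that specific choice, once the long edges have labels in $H_0$, the lift of $b_C$ must land inside the stabilized subgraph $Y^i$ (its endpoints cannot be in a component of $K\setminus Y^i$), and since $Y^i$ is eventually independent of $i$ one gets a fixed proper free factor system carrying every $b$, contradicting~(2). Your generic exhaustion $\alpha_k(b)$ lacks this $H_0$-endpoint feature, so even if you had the joint-coverage step you would not be able to trap the lifts inside the stable part. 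In short: the contradiction template is right, but you need (a) the folding-to-minimal-covering-stage trick to force joint $B_1$/$B_2$ coverage of every edge, and (b) the $\F_0$-aware choice of neighborhoods from condition~(1); without these two ingredients the argument does not close.
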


 
\subparagraph{Remark.} In the absolute case where $\F_0=\emptyset$ this statement is equivalent to Proposition~8.1 of \cite{HandelMosher:SubgroupOutF_n}.

\begin{proof} Choose a marked graph $H$, all of whose vertices have valence at least three, with a core subgraph $H_0$ realizing $\F_0$. Let $L(\cdot)$ denote the edge length of paths in $H$. Let each $b \in B_1 \union B_2$ be realized by a line in $H$ with a chosen base point. For each integer~$C \ge 1$, let $b_C$ be the subpath of $b$ that contains the base point, starts and ends with edges of $H \setminus H_0$, and has exactly $2C$ edges of $H \setminus H_0$, with $C$ edges of $H \setminus H_0$ before the base point and $C$ after it.  The existence of $b_C$ follows from (1).   Let $\wh U(b,C) \subset \wh\B(H)$ denote the weak neighborhood of $b$ consisting of paths in $H$ that  contain $b_C$ as a subpath. Let $U(b,C) = \wh U(b,C) \intersect \B(H) \subset \B(H) \approx \B$.

Consider triples $(G,S,\rho)$ consisting of a marked graph $G$, a proper connected core subgraph $S \subset G$, and a map $\rho \from G \to H$ with the following properties: $\rho$ preserves marking; $\rho$ takes vertices to vertices; $\rho$ is an immersion on each edge of $G$; and $\rho$ is an immersion on the subgraph $S$. It follows that the restriction of $\rho$ to any path in $S$ is a path in $H$. Such a triple is called a \emph{representative} of a proper free factor $F$ if $[F]=[S]$. We put a metric on each edge of $G$ by pulling back the metric on $H$ under the map $\rho$, and we extend the length notation $L(\cdot)$ by setting $L(E)=L(\rho(E))$ for each edge $E \subset G$. Note that a line $\ell \in \B$ is carried by $[F]$ if and only if its realization $\ell_G$ in $G$ is contained in $S$, in which case the restriction of $\rho$ to $\ell_G$ is an immersion whose image is its realization $\ell_H$ in~$H$.

Every proper free factor $F$ has a representative $(G,S,\rho)$. To see why, it is evident that there exists a triple $(G,S,\rho)$ that satisfies all the required properties except that $\rho$ need not be an immersion on $S$. Factor $\rho  \from G \to H$ as a composition of folds as indicated in the following diagram, with certain partial compositions also indicated:
$$\xymatrix{
G = G_0 \ar[r]^{p_1} \ar@/^2pc/[rrr]^{P_m}
  & G_1 \ar[r]^{p_2} 
  & \cdots \ar[r]^{p_m} 
  & G_m \ar[r]^{p_{m+1}} \ar@/_2pc/[rrr]^{\rho_m}
  & \cdots \ar[r]^{p_{M-1}} 
  & G_{M-1} \ar[r]^{p_M} 
  & G_M=H
}$$ 
Mark each graph in this diagram so that all maps preserve marking. By giving precedence to folds involving two edges of the subgraph $S$, we may assume that there exists $J \ge 1$ such that $S_J := P_J(S)$ is a proper subgraph of $G_J$, the restricted map $P_J \restrict S \from S \to S_J$ is a homotopy equivalence, and the restricted map $\rho_J \restrict S_J \from S_J \to H$ is an immersion. After replacing $(G,S,\rho)$ by $(G_J,S_J,\rho_J)$ we obtain a representative of $F$.

We shall prove the proposition using weak neighborhoods of the form $U(b_\F)=U(b_\F,C)$ for some integer $C \ge 1$ independent of~$\F$. If this fails then there exist integers $C_i \to \infty$ and proper free factor systems $\F^i$ represented by $(G^i,S^i,\rho^i)$ such that for every $b \in B_1 \cup B_2$ and every $i$ there exists a line in $S^i$ whose image under $\rho^i$ has $b_{C_i}$ as a subpath; the existence of such a line is equivalent to the existence of a finite path $\beta^i(b) \subset S^i$ whose image $\rho^i(\beta^i(b))$ equals $b_{C_i}$, because every path in $S^i$ extends to a line in $S^i$. Assuming the existence of such \emph{pullback paths} $\beta^i(b)$, we argue to a contradiction.

By the natural simplicial structure on a marked graph,  we mean the one in which each vertex has valence at least three. 

We prove that $L(S^i) \to \infinity$ and so $L(G^i) \to \infinity$. If $L(S^i) \not\to \infinity$ then, after passing to a subsequence, the pair $(S^i,\rho^i)$ is independent of~$i$ in the sense that for any $i,j$ there is a homeomorphism $S^i \leftrightarrow S^j$ that commutes with the maps $\rho^i,\rho^j$ to $H$; in particular it follows that the free factor system $[S^i] = \F^i$ is constant, independent of~$i$. Fixing $b \in B_1 \union B_2$, for each $i$ there are only finitely many choices for the path $\beta^i(b)$ which is a pullback to~$S^i$ of $b_{C_i}$, and since each pullback of each $b_{C_i}$ restricts to a pullback of $b_{C_{i-1}}$ it follows that one can choose the pullback paths $\beta^1(b) \subset \beta^2(b)\subset \beta^3(b)\subset\cdots$ in $S^i$ to be nested and so that their union is a pullback of $b$ to $S^i$ (we refer to this as the ``pullback--nesting'' argument and we use variants of this argument below). This shows that the realization of each $b \in B_1 \union B_2$ in each $G^i$ is carried by $S^i$ and so $b$ is carried by the constant free factor system $\F^i$, contradicting~(2).   

For each $i$ we adopt folding notation as in the above diagram, obtaining maps denoted
$$\xymatrix{
G^i \ar[r]^{P^i_m} \ar@/_1pc/[rr]_{\rho^i} & G^i_m \ar[r]^{\rho^i_m} & H
}
$$ 
If $i$ is sufficiently large, then (2) implies that for each $l=1,2$ the collection of paths 
$\{\rho^i(\beta^i(b)) = b_{C_i} \suchthat b\in B_l\}
$ 
crosses every edge in $H$. We may therefore choose a minimal index $M(i)$ such that for each $l=1,2$ the collection of paths $\{P^i_{M(i)}(\beta^i(b)) \suchthat b \in B_l\}$ crosses every edge in $G^i_{M(i)}$.  


We prove next that $L(G^i_{M(i)}) \to \infinity$. If $L(G^i_{M(i)}) \not\to \infinity$ then, since $L(G^i) \to \infinity$, after passing to a subsequence it follows that $G^i \ne G^i_{M(i)}$, and so the graph $G^i_{M(i)-1}$ is defined. Furthermore since the map $G^i_{M(i-1)} \mapsto G^i_{M(i)}$ is just a fold it follows that $L(G^i_{M(i)-1}) \not\to\infinity$. After passing to a further subsequence, we may assume that the pair $(G^i_{M(i)-1},\rho^i_{M(i)-1})$ is independent of $i$ in the same sense as earlier, and that there is a natural edge $E^i \subset G^i_{M(i)-1}$ independent of $i$ such that $E^i$ is not crossed by the collection of paths $\{P^i_{M(i)-1}(\beta^i(b)) \suchthat b \in B_l\}$ for at least one of $l =1,2$. Fix such an $l$. For each $b \in B_l$, as $i$ increases we may remove uniformly bounded initial and terminal segments of $P^i_{M(i)-1}(\beta^i(b))$ to obtain a path contained in the core of $G^i_{M(i)-1} \setminus E^i$, and by a pullback--nesting argument as above it follows that the realization of $b$ in $G^i_{M(i)-1}$ is contained in the core of $G^i_{M(i)-1} \setminus E^i$. 
But then the free factor system determined by the core of $G^i_{M(i)-1} \setminus E^i$ carries $B_l$, in contradiction to (2). We may therefore assume that $L(G^i_{M(i)}) \to \infty$.  

Each natural edge $E^i$ of $G^i_{M(i)}$ is crossed by some path of the form $P^i_{M(i)}(\beta^i(b))$ for some element $b \in B_1$ and also for some element $b \in B_2$. Since $\rho^i = P^i_{M(i)} \composed \rho^i_{M(i)}$ is an immersion on $S^i$, it is an immersion on the path $\beta^i(b)$, and so $\rho^i_{M(i)}$ is an immersion on $P^i_{M(i)}(\beta^i(b))$. It follows that the path $\rho^i_{M(i)}(E^i)$ in $H$ is a subpath of both an element of $B_1$ and an element of $B_2$. Since $\F_0$ carries  $\cl(B_1) \cap \cl(B_2)$, it follows that there is a constant $K$ so that for each~$i$ and each edge $E^i$ of $G^i_{M(i)}$, either the path $\rho^i_{M(i)}(E^i)$ has length at most $2K$ or $\rho^i_{M(i)}(E^i)$ decomposes as a concatenation of initial and terminal subpaths of length $K$ and a central subpath that is contained in $H_0$.  

After passing to another subsequence, we may assume that for all $i$ there is a subgraph $Y^i \subset G^i_{M(i)}$ such that each component of $G^i_{M(i)}\setminus Y^i$ is an arc (the central subpath of some natural edge) whose length tends to $\infty$ with $i$ and whose image under $\rho^i_{M(i)}$ is contained in $H_0$, and such that the image of each edge of $Y^i$ under $\rho^i_{M(i)}$ has uniformly bounded length in $H$. After passing to another sequence we may assume that the pair $(Y^i,\rho^i_{M(i)})$ is independent of $i$ in the same sense as earlier, and so the free factor system determined by $Y^i$ is independent of~$i$. Fix $k$ and consider any $i > k$. The path $P^i_{M(i)}(\beta^i(b))$, whose $\rho^i_{M(i)}$-image is $b_{C_i}$, has a subpath which is a pullback of $b_{C_k}$, and since $b_{C_k}$ begins and ends with edges not in $H_0$ it follows that this pullback subpath must be contained in $Y^i$, for all sufficiently large $i$. Since $k$ is arbitrary and $(Y^i,\rho^i_{M(i)})$ is independent of $i$, it follows by another pullback--nesting argument that for sufficiently large $i$ the realization of $b$ in $G^i_{M(i)}$ is contained in $Y^i$ and so $b$ is carried by the free factor system determined by $Y^i$, but this free factor system is independent of~$i$, contradicting~(2).
\end{proof}

\bibliographystyle{amsalpha} 
\bibliography{mosher} 

 \end{document}